\newif\ifDraft
\begin{document}

\maketitle
\thispagestyle{empty}
\pagestyle{empty}

\begin{abstract}
	We propose a simple safety filter design for stochastic discrete-time systems
based on piecewise affine probabilistic control barrier functions,
providing an appealing balance between modeling flexibility and
computational complexity.
Exact evaluation of the safety filter consists of solving a
mixed-integer quadratic program (MIQP) if the dynamics are control-affine,
(or a mixed-integer nonlinear program in general).
We propose a heuristic search method that replaces this by a
small number of small-scale quadratic programs (QPs), or nonlinear programs (NLPs)
respectively.
The proposed approach provides a flexible framework in which arbitrary (data-driven)
quantile estimators can be used to bound the probability of safety violations.
Through extensive numerical experiments,
we demonstrate improvements in conservatism and computation time
with respect to existing methods,
and we illustrate the flexibility of the method for modeling complex
safety sets.
Supplementary material can be found at \href{https://mathijssch.github.io/ecc26-supplementary/}{mathijssch.github.io/ecc26-supplementary/}.

\end{abstract}

\section{Introduction}
In recent years, there has been a tremendous increase in
data-driven and learning-based controller design.
Leveraging recent advances in machine learning, as well as
the ubiquity of data and high-fidelity simulators,
these approaches often achieve remarkable levels of performance,
in particular for complex tasks.
However, oftentimes, these controllers only come with very
limited safety guarantees \cite{brunke_SafeLearningRobotics_2022}.

This has motivated an increased research interest in so-called \textit{safety filters},
which essentially operate by projecting the control action
of some given policy onto a set of admissible inputs
designed to guarantee specific safety properties of the system.
The key idea is to preserve as much of the baseline controller’s performance as possible,
enforcing safety through only minimal and necessary interventions.

At the core of designing such a safety filter lies the \ac{CBF}.
\acp{CBF}, originally coined in \cite{wieland_ConstructiveSafetyUsing_2007} for continuous-time systems,
are functions whose 0-superlevel set is a controlled invariant set.
They play a role analogous to control Lyapunov functions
(used extensively in the theory of \ac{MPC}),
but with a focus on ensuring safety rather than stability \cite{ames_ControlBarrierFunctions_2019}.

Over the years, several variants of \acp{CBF} have emerged, each extending the
original framework to handle different settings \cite{wang2025safesurv,li2023survey}.
Discrete-time \acp{CBF} were introduced in \cite{agrawal_discrete_2017} as a
counterpart to continuous-time \acp{CBF}.
For uncertain systems, robust \acp{CBF} \cite{breeden2023robust,liu2025robust} were introduced to handle bounded disturbances. These approaches are suitable
when the support of the disturbance is compact and can be tightly estimated.
Whenever this is not the case, such robust formulations tend to be either inapplicable
(e.g., for unbounded noise) or overly conservative (no tight estimates of the support).
This motivates stochastic and
\acp{PCBF}, which aim to bound the probability of leaving the safe set
over a finite horizon (the $N$\textit{-step exit probability}).
Although conceptually, these approaches allow us to deal with unbounded noise,
they require very careful design that ensures that the stage-wise exit
probability is tightly bounded, in particular for long horizons $N$,
where conservative bounds may easily lead to vacuous estimates of the $N$-step exit probability.

A common approach to the design of \ac{PCBF} is to enforce a supermartingale
condition at every time step and leverage concentration inequalities, such as Ville's
\cite{cosner_RobustSafetyStochastic_2023,cosner_generative_2023} or Freedman's
inequalities \cite{cosner_bounding_2024}. These methods require moment
information and often result in conservative bounds. More recent work attempts to
reduce this conservatism by introducing auxiliary functions that reshape the
expectation condition, though these approaches remain restricted to Gaussian
noise models \cite{fushimi_SafetyCriticalControlDiscretetime_2025}. Risk-based
formulations using CVaR \cite{wang_safe_2025,kishida_risk-aware_2024} offer an
alternative but likewise are prone to conservatism.

More recent work has relaxed distributional assumptions entirely,
using data-driven estimators such as conformal prediction or the scenario approach \cite{mestres_ProbabilisticControlBarrier_2025}.
While appealing for its generality, it is very challenging for such an approach
to provide tight guarantees without exploiting any additional problem structure.

Our goal is therefore to develop a probabilistic safety filter that is able to exploit
as much as possible known structure of the \ac{PCBF} (ensuring tightness of the violation probabilities),
while maintaining sufficient flexibility in the \ac{PCBF} class to model complex
safe set geometries.
In general, non-smooth \acp{CBF} (including piecewise-constant \acp{CBF} \cite{mazouz_piecewise_2025})
have been identified as useful
to balance flexibility and structure \cite{cavorsi2020tractable}.
In this work, we will restrict our attention in particular to piecewise affine
\ac{CBF} candidates, which can model arbitrary safe sets, potentially even allowing
the use of representations based on neural networks, with ReLU activation functions.

We highlight the following contributions of this work:
\begin{inlinelist*}
	\item We synthesize a safety filter using a probabilistic PWA \ac{CBF} condition requiring knowledge only
	of the disturbance quantiles
	\item We introduce a heuristic solution method to speed up evaluation of the safety filter
	\item We extend the formulation to the data-driven setting, where the distribution of
	the disturbance is not known, but rather estimated from data
	\item We present extensive numerical experiments comparing our method with existing approaches,
    demonstrating its effectiveness, reduced conservativeness and flexibility.
\end{inlinelist*}

\section*{Notation}
For $k \in \N$, let
\( [k]  = \{ 1, 2, \dots, k \}\).
Let $\xi: \Omega \to \Xi$ denote a random variable on a probability
space $(\Omega, \F, \prob)$.
As is customary, we will often denote
an event $\{ \omega \in \Omega \mid \xi(\omega) \in A \} \in \F$
by the shorthand $\{ \xi \in A \}$.
We denote by
$q_{\delta}(Z) = \inf \{ \tau \mid \prob\{ Z \leq \tau \} \geq \delta \}$
the $\delta$-quantile of random variable $Z: \Omega \to \R$, 
and by $\supp(Z) = \itrsect \{ \cl(C) \mid \prob\{Z \in C\}=1\}$
the support of $Z$.
We denote by $\borel(\R^n)$ the Borel sigma-algebra on $\R^n$.
The notation $T: \R^n \rightrightarrows \R^m$ indicates that
$T$ is a set-valued mapping, i.e., $T(x) \subseteq \R^m$, $x \in \R^n$.
The (effective) domain of $T$ is defined as
$\dom T \dfn \{ x \in \R^n \mid T(x) \neq \emptyset \}$.
We denote by $\compl{C} = \R^n \setminus C$,
the complement of a set $C \in \R^n$
and by $\lev_{\geq \gamma} h = \{ x \in \R^n \mid h(x) \geq \gamma \}$
the $\gamma$-superlevel set of function $h: \R^n \to \R$.
$v_{(k)}$, $k \in [n]$ denotes the $k$-th order statistic of $(v_i)_{i\in[n]}$,
i.e., $v_{(1)} \leq v_{(2)} \leq  \dots \leq v_{(n)}$.

\section{Problem Statement} \label{sec:problem-statement}
Consider a nonlinear, discrete-time dynamical system governed by dynamics
\begin{equation}\label{eq:dynamics}
	x_{k+1} = F(x_k, u_k, \xi_k) = f(x_k, u_k) + g(x_k, \xi_k), \; k \in \N,
\end{equation}
where $x_k \in \R^\ns$, $u_k \in U \subseteq \R^\na$ are the state and input at time $k$, respectively,
and $(\xi_k)_{k \in \N}$ is a stochastic process on a filtered probability
space $(\Omega, \F, (\F_k)_{k \in \N}, \prob)$,
taking values on $\R^{\nxi}$.
We assume $U$ is a nonempty closed, convex set.
We will assume $(u_k)_{k \in \N}$ is adapted to the filtration $(\F_k)_k$,
that is, it depends \textit{causally} on the randomness in the system.
Furthermore, we will assume that $\xi_{k}$ is $\F_{k+1}$-measurable
(which implies its value is not revealed until time step $k + 1$).
\ifExtended
\begin{remark}
	A sufficient condition for $\F_{k+1}$-measurability of $\xi_k$ is to have state measurements,
	and $\xi \mapsto g(x, \xi)$ invertible for all $x \in \R^\ns$.
	However, in practice, there may be other ways of measuring $\xi_k$,
	so there is no need to make this stronger assumption.
\end{remark}
\fi
Finally, for simplicity,
we assume that $(\xi_k)_{k \in \N}$ is an i.i.d.\ sequence,
(i.e., $\xi_k$ is independent of $\F_k$ for all $k \in \N$).
This allows us to define $\prob_{\xi}$, the common law of $\xi_k$:
\[
  \prob\{\xi_k \in E\} = \prob_{\xi}(E), \quad
	E \in \borel(\R^{\nxi}), \; k \in \N.
\]

Let $S \subseteq \R^{\ns}$ be a set of admissible
states, commonly referred to as a \textit{safe set}.
Ideally, we would like to synthesize a control policy
$\pi = (\kappa_k)_{k \in \N}$ with $\kappa_k: \R^{\ns} \to \R^{\na}$,
which ensures that the state $x_k$ governed by \eqref{eq:dynamics}
with $u_k = \kappa_k(x_k)$
remains within the set $S$ almost surely.
Although there are special cases in which this can
be guaranteed (for instance, when $g(x_k, \xi_k)$ is restricted to a compact set),
this is not an achievable goal for general unbounded noise.

Therefore, we instead consider a probabilistic variant of this objective,
similar to \cite{fushimi_SafetyCriticalControlDiscretetime_2025,mestres_ProbabilisticControlBarrier_2025}.
We will consider finite control horizons and consider the following condition for failure.
\begin{definition}[$N$-step exit probability]
	The $N$-step exit probability for system \eqref{eq:dynamics}
	and initial state $x \in S$ under policy $\pi$ is given by
	\[
		P_{N}(x; \pi) \dfn \prob \big\{ \exists k \in [N] : x_{k} \notin S\big\}.
	\]
	When $\pi = (\kappa_k)_{k \in \N}$ is time-invariant, (i.e., $\kappa_k = \kappa, \forall k \in \N$)
	we will slightly abuse
	notation and simply write $P_{N}(x; \kappa)$.
\end{definition}
In summary, our objective is to ensure that the $N$-step exit probability
remains bounded:
\begin{problem} \label{prob:main-problem}
Given an initial state $x \in S$, and an allowable exit probability
$\epsilon \in (0, 1)$, synthesize a policy $\pi = (\kappa_k)_{k \in \N}$ under which
\begin{equation}\label{eq:exit-prob-condition}
	P_{N}(x; \pi) \leq \epsilon.
\end{equation}
\end{problem}

We will address \cref{prob:main-problem} by means of
probabilistic control barrier functions.
\Acp{CBF} have gained significant popularity in recent years, in large part due to
their flexibility and computational simplicity, especially for deterministic systems.
Recently, their usage has been extended to stochastic systems with (potentially unbounded)
noise.

\begin{definition}[Probabilistic \acs{CBF} {\cite[Def. 2]{mestres_ProbabilisticControlBarrier_2025}}] \label{def:prob-cbf}
	Fix $\delta \in (0, 1)$.
	Function $h: \R^{\ns} \to \R$ is a $\delta$-probabilistic \ac{CBF}
	($\delta$-\acs{PCBF})
	for system \eqref{eq:dynamics} and safe set $S = \lev_{\geq 0}h$,
	if there exists $\alpha \in [0, 1]$, such that
	\begin{equation}\label{eq:p-cbf}
		\forall x \in S, \exists u \in U: \prob_{\xi}\{ h(F(x,u,\xi)) \geq \alpha h(x) \} \geq 1 - \delta.
	\end{equation}
\end{definition}

\begin{remark}
	Technically, \cref{def:prob-cbf}
	could be generalized to allow non-i.i.d. disturbances and time-varying
	tolerances $\delta$. However, since we have already restricted our scope to i.i.d. disturbances,
	this provides little added value in our setting.
\end{remark}

\begin{proposition}[{\cite[Prop. 1]{mestres_ProbabilisticControlBarrier_2025}}] \label{prop:relation-delta-epsilon}
  Fix $N \in \N$, and let $h$ be a $\delta$-\acs{PCBF} with
	\begin{equation}\label{eq:condition-eps-delta-const}
		\delta \leq 1 - (1 - \epsilon)^{\frac{1}{N}}.
	\end{equation}
	Then, for any policy $\pi = (\kappa_k)_{k\in [N]}$, satisfying \eqref{eq:p-cbf}
	with $u_k = \kappa_k(x_k)$, for all $k \in [N]$, 
  $P_{N}(x, \pi) \leq \epsilon$ for all $x \in S$.
\end{proposition}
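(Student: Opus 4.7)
The plan is to translate the one-step PCBF guarantee into a multi-step bound by iterated conditioning along the filtration $(\F_k)_{k \in \N}$, and then apply the assumption on $\delta$. The safety event admits the clean decomposition
\[
  \{\forall k \in [N] : x_k \in S\} = \bigcap_{k=1}^N A_k, \quad A_k \dfn \{x_k \in S\},
\]
so that $P_N(x; \pi) = 1 - \prob(\cap_{k=1}^N A_k)$. The goal reduces to showing $\prob(\cap_{k=1}^N A_k) \geq (1-\delta)^N$, which together with $(1-\delta)^N \geq 1-\epsilon$ (equivalent to the hypothesis on $\delta$) yields the claim.

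\textbf{Key steps.} First, I would observe the following consequence of $S = \lev_{\geq 0} h$ and $\alpha \in [0, 1]$: on the event $A_k = \{h(x_k) \geq 0\}$, one has $\alpha h(x_k) \geq 0$, so the inclusion of events
\[
  A_k \cap \{h(x_{k+1}) \geq \alpha h(x_k)\} \subseteq A_k \cap A_{k+1}
\]
holds. Second, I would invoke the PCBF condition in its conditional form. Since $x_k$ is $\F_k$-measurable and $u_k = \kappa_k(x_k)$ is applied causally, the i.i.d.\ assumption (independence of $\xi_k$ from $\F_k$) gives, on the event $A_k$,
\[
  \prob\bigl(h(x_{k+1}) \geq \alpha h(x_k) \,\bigm|\, \F_k\bigr) = \prob_{\xi}\bigl\{ h(F(x_k, \kappa_k(x_k), \xi)) \geq \alpha h(x_k)\bigr\} \geq 1 - \delta,
\]
where the inequality is \eqref{eq:p-cbf} evaluated at $x = x_k \in S$. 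Combining with the event inclusion above,
\[
  \prob(A_{k+1} \mid \F_k) \, \mathbf{1}_{A_k} \geq (1-\delta)\,\mathbf{1}_{A_k} \quad \text{a.s.}
\]

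\textbf{Completing the induction.} Third, I would chain these one-step bounds using the tower property. Since $A_1 \cap \dots \cap A_k \in \F_k$, taking expectations yields
\[
  \prob\bigl(\cap_{j=1}^{k+1} A_j\bigr) = \E\bigl[\mathbf{1}_{\cap_{j=1}^k A_j}\, \prob(A_{k+1} \mid \F_k)\bigr] \geq (1-\delta)\,\prob\bigl(\cap_{j=1}^{k} A_j\bigr).
\]
Iterating from $k = 1$ down to the base case $\prob(A_1) = 1$ (since we start from $x_0 = x \in S$ and $A_1$ refers to the first post-transition check; if the convention counts $x_0$ itself, the base case still holds by assumption), I obtain $\prob(\cap_{k=1}^N A_k) \geq (1-\delta)^N$. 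Finally, the hypothesis $\delta \leq 1 - (1-\epsilon)^{1/N}$ rearranges to $(1-\delta)^N \geq 1-\epsilon$, so $P_N(x; \pi) \leq \epsilon$.

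\textbf{Main obstacle.} The only subtlety is measure-theoretic: one must justify that the unconditional PCBF statement in \cref{def:prob-cbf} upgrades to the conditional bound $\prob(A_{k+1} \mid \F_k) \geq (1-\delta)\mathbf{1}_{A_k}$. This relies on (a) the i.i.d.\ assumption, which makes $\xi_k$ independent of $\F_k$ and gives $\prob_\xi$ as the conditional law, and (b) causality of the policy, ensuring $\kappa_k(x_k)$ is $\F_k$-measurable so that the randomness in $x_{k+1}$ given $\F_k$ enters only through $\xi_k$. Once this conditioning step is in hand, the rest of the proof is a routine telescoping argument.
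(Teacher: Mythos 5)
The paper does not prove this proposition itself---it is imported verbatim from the cited reference---so there is no in-paper argument to compare against. Your proof is the standard (and surely the intended) one: use independence of $\xi_k$ from $\F_k$ together with $\F_k$-measurability of $x_k$ and $\kappa_k(x_k)$ to upgrade the unconditional condition \eqref{eq:p-cbf} to $\prob(A_{k+1}\mid\F_k)\,\mathbf{1}_{A_k}\geq(1-\delta)\,\mathbf{1}_{A_k}$, observe that $\alpha h(x_k)\geq 0$ on $A_k$ so that $h(x_{k+1})\geq\alpha h(x_k)$ forces $x_{k+1}\in S$, and telescope via the tower property using $\bigcap_{j\leq k}A_j\in\F_k$. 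All the essential ingredients are present and the measure-theoretic subtlety you single out is exactly the one that matters.

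One bookkeeping slip: the base case is not $\prob(A_1)=1$. If $A_1=\{x_1\in S\}$ refers to the first post-transition state, then $x_1$ is random and all you can assert is $\prob(A_1)\geq 1-\delta$, obtained by applying your one-step bound once at the deterministic initial condition $x\in S$; the remaining $N-1$ iterations of the recursion then give exactly $\prob\bigl(\bigcap_{k=1}^{N}A_k\bigr)\geq(1-\delta)^{N}$. (Equivalently, adjoin $A_0=\{x_0\in S\}$ with $\prob(A_0)=1$ and run the recursion $N$ times.) Your final exponent is correct, but as written the stated base case is false and, paired with $N-1$ iterations, would only justify $(1-\delta)^{N-1}$ by a valid argument---the conclusion survives only because $(1-\delta)^{N-1}\geq(1-\delta)^{N}$ masks the miscount. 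This is a one-line fix, not a flaw in the approach.
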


Given a probabilistic \ac{CBF} $h$, \cref{prob:main-problem} can thus be
restated as finding a policy $\pi$ that satisfies the conditions of \cref{prop:relation-delta-epsilon}

The remainder of this paper is organized as follows.
\Cref{sec:verification}
will be dedicated to verifying \eqref{eq:p-cbf}
online for general piecewise affine \acp{CBF} in the case where
quantiles of linear functions of $g(x, \xi)$
can be computed exactly (given $x$).
In \cref{sec:data-driven},
we will relax this assumption, extending our results to
data-driven quantile estimates.
In \cref{sec:numerical}, we provide
extensive numerical experiments, comparing our approach to existing methods.

\section{Verification of piecewise affine CBFs} \label{sec:verification}
We now direct our attention to the numerical verification of
the probabilistic \ac{CBF} condition \eqref{eq:p-cbf}
for piecewise affine functions $h$. Without loss of generality \cite{ovchinnikov_MaxMinRepresentationPiecewise_2002},
we will assume that $h$ is represented as,
\begin{equation}\label{eq:pw-lin-barrier}
	h(x) = \min_{i \in [\np]} h_{i}(x) \text{ with } h_i(x) = \max_{j \in [\nf_i]} c_{ij}^\top x - b_{ij}.
\end{equation}
Observe that the corresponding safe set $S = \lev_{\geq 0} h$ can be constructed as
the complement of $\np$ convex polyhedral sets, where the
$i$-th polyhedron has $\nf_i$ facets.
Indeed, consider a collection $\{ P_i \}_{i=1}^{\np}$ of (open) polyhedral sets
\[
	P_i = \{ x \in \R^{\ns} \mid C_i x < b_i \}, \text{ with } C_i \in \R^{\nf_i} \times \R^{\ns}, b_i \in \R^{\nf_i},
\]
where $C_i$ is a matrix with rows $c_{ij} \in \R^{\ns}$, $j \in [\nf_i]$.
Then $x \notin P_{i} \iff h_i(x) \geq 0$, $i \in [\np]$
and therefore,
\(
x \notin \union_{i \in [\np]} P_i \iff h(x) \geq 0.
\)
Thus,
\(
S = \compl{\big(\union_{i \in [\np]} P_i\big)}.
\)

\subsection{Sufficient conditions} \label{sec:sufficient-conditions}

In order to obtain a sufficient condition for \eqref{eq:p-cbf},
let us first restrict our attention to a single set $P_i$.
For ease of notation, we will define the following short-hands
\begin{subequations}\label{eq:shorthands}
	\begin{align}
		\Dh(x,u,\xi)     & \dfn \min_{i \in [\np]} \Dh_i(x,u,\xi),                      \\
		\Dh_i(x, u, \xi) & \dfn h_i(F(x,u,\xi)) - \alpha h(x), \label{eq:d-h}          \\
		\bt_{ij}(x)      & \dfn b_{ij} - \alpha h(x), \quad x \in \R^n, \label{eq:d-b}
	\end{align}
\end{subequations}
so $h_i(F(x,u, \xi)) \geq \alpha h(x)$ if and only if
\(
\Dh_i(x, u,\xi) \geq 0
\).
We do not explicitly reflect the dependence of $\Dh_i$ and $\bt_{ij}$ on $\alpha$
in the notation, since $\alpha$ will be a constant, chosen \textit{a priori}.

Now we have the following elementary results (proofs
are provided in \cref{sec:appendix} for completeness.)
\begin{lemma} \label{lem:bound-single}
  For all $x \in \R^\ns$, $u \in \R^\na$, $i \in [\np]$,
	\[
    \prob_{\xi}\{ \Dh_i(x, u, \xi) < 0 \} \leq \min_{j \in [\nf_i]} \prob_{\xi}\{ c_{ij}^\top F(x, u, \xi) < \bt_{ij}(x) \}.
	\]
\end{lemma}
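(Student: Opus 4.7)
The plan is to rewrite the event $\{\Dh_i(x,u,\xi)<0\}$ as an intersection of events involving the individual affine components defining $h_i$, and then conclude by monotonicity of the probability measure.

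First, I would unfold the definition \eqref{eq:d-h} of $\Dh_i$ together with the representation \eqref{eq:pw-lin-barrier} of $h_i$ as a maximum of affine pieces. The event $\{\Dh_i(x,u,\xi)<0\}$ is, by \eqref{eq:d-h}, the event $\{h_i(F(x,u,\xi)) < \alpha h(x)\}$, which in view of \eqref{eq:pw-lin-barrier} reads
\[
  \{\Dh_i(x,u,\xi)<0\} = \{\max_{j\in[\nf_i]} c_{ij}^\top F(x,u,\xi) - b_{ij} < \alpha h(x)\}.
\]

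Second, I would invoke the elementary identity $\{\max_j a_j < \tau\} = \bigcap_j \{a_j < \tau\}$, valid for any finite family of reals, to turn the max-condition into a conjunction. Using the shorthand from \eqref{eq:d-b}, this gives
\[
  \{\Dh_i(x,u,\xi)<0\} = \bigcap_{j \in [\nf_i]} \{c_{ij}^\top F(x,u,\xi) < \bt_{ij}(x)\}.
\]

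Finally, I would appeal to monotonicity of $\prob_\xi$: for every fixed $j_0 \in [\nf_i]$, the intersection above is contained in the single set $\{c_{ij_0}^\top F(x,u,\xi) < \bt_{ij_0}(x)\}$, so its probability is at most $\prob_\xi\{c_{ij_0}^\top F(x,u,\xi) < \bt_{ij_0}(x)\}$. Since this holds for every $j_0$, I can minimize the right-hand side over $j_0 \in [\nf_i]$ to obtain the stated bound.

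There is no real obstacle here: the argument is essentially the observation that the strict sublevel set of a max-of-affine function is an intersection of open half-spaces, combined with the trivial inequality $\prob(\cap_j A_j) \leq \min_j \prob(A_j)$. The only minor bookkeeping lies in threading the $\alpha h(x)$ shift through the shorthand $\bt_{ij}(x)$ from \eqref{eq:d-b}.
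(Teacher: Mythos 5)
Your proof is correct and takes essentially the same route as the paper's: the paper argues on the complementary event, writing $\{\Dh_i(x,u,\xi)\geq 0\}=\bigcup_{j}\{c_{ij}^\top F(x,u,\xi)\geq \bt_{ij}(x)\}$ and lower-bounding the union by the maximum before taking complements, which is exactly the De Morgan dual of your intersection/minimum argument.
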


From \cref{lem:bound-single}, we easily obtain
a bound on the overall violation probability of any of the constituents $P_i$.

\begin{lemma} \label{lem:union-bound}
	For any $x \in \R^\ns$, $u \in \R^{\na}$,
	\begin{equation*}\label{eq:union-bound}
		\prob_{\xi} \{ \Dh(x, u, \xi) < 0 \}
		\leq
		\sum_{i \in [\np]} \prob_{\xi} \{ \Dh_i(x, u, \xi) < 0 \}.
	\end{equation*}
	If the sets $P_i$, $i \in [\np]$ are pairwise disjoint, then
	this holds with equality.
\end{lemma}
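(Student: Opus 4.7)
The plan is to split the statement into (i) the inequality, which follows immediately from the fact that the event in question decomposes as a finite union and one application of Boole's inequality, and (ii) the equality under disjointness, which requires upgrading subadditivity to additivity by verifying that the constituent events are pairwise disjoint.

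For the inequality, I would first rewrite the event using the shorthand $\Dh(x,u,\xi) = \min_{i\in[\np]} \Dh_i(x,u,\xi)$. Since a minimum of finitely many reals is strictly negative if and only if at least one of them is, we have
\[
\{\Dh(x,u,\xi) < 0\} = \bigcup_{i \in [\np]} \{\Dh_i(x,u,\xi) < 0\}.
\]
Finite subadditivity of the probability measure $\prob_\xi$ (Boole's inequality) applied to this union of $\np$ events immediately yields the claimed bound.

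For the equality claim, let $A_i \dfn \{\Dh_i(x,u,\xi) < 0\}$ and observe that finite additivity of $\prob_\xi$ gives $\prob_\xi\bigl(\bigcup_{i\in[\np]} A_i\bigr) = \sum_{i\in[\np]} \prob_\xi(A_i)$ as soon as the $A_i$ are pairwise disjoint (up to $\prob_\xi$-null sets). Since $A_i = \{F(x,u,\xi) \in Q_i(x)\}$ with $Q_i(x) \dfn \{y\in\R^{\ns} : h_i(y) < \alpha h(x)\}$, the task reduces to a purely deterministic geometric statement: $Q_i(x) \cap Q_j(x) = \emptyset$ whenever $i\neq j$ and $P_i \cap P_j = \emptyset$, using that $P_i = \{y : h_i(y) < 0\}$.

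The main obstacle lies in this last geometric step. Because $Q_i(x)$ is a facet-wise inflation of $P_i$ by the nonnegative quantity $\alpha h(x)$, disjointness of the base sets $P_i, P_j$ does not transfer verbatim: two disjoint polyhedra may well have overlapping inflations when $\alpha h(x) > 0$. I would therefore expect the argument either to rely on an implicit tightness assumption (e.g.\ $\alpha = 0$, or $x$ on the boundary of $S$ so that $\alpha h(x) = 0$, which makes $Q_i(x) = P_i$ and the disjointness transfer trivially), or to exploit a more careful use of the PWA representation to show that any $y \in Q_i(x) \cap Q_j(x)$ would force $h_i(y), h_j(y) < 0$, contradicting $P_i \cap P_j = \emptyset$. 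Once this disjointness is settled, finite additivity closes the proof.
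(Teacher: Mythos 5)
Your proof of the inequality is correct and is exactly the paper's argument: since $\Dh(x,u,\xi)=\min_{i\in[\np]}\Dh_i(x,u,\xi)$, the event $\{\Dh(x,u,\xi)<0\}$ is the finite union $\bigcup_{i\in[\np]}\{\Dh_i(x,u,\xi)<0\}$, and Boole's inequality gives the bound.

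For the equality claim you stop short of a proof, but the obstacle you flag is real, and you have in fact caught something the paper's own proof glosses over. The paper closes the argument by asserting $\{\Dh_i(x,u,\xi)<0\}=\{F(x,u,\xi)\in P_i\}$, which in your notation is the claim $Q_i(x)=P_i$; this holds only when $\alpha h(x)=0$. Your second proposed rescue (showing $y\in Q_i(x)\cap Q_j(x)$ forces $h_i(y)<0$ and $h_j(y)<0$) cannot work when $\alpha h(x)>0$, and the stated equality can then genuinely fail: in $\R$ take $P_1=(0,1)$, $P_2=(2,3)$, so $h_1(y)=\max(-y,\,y-1)$, $h_2(y)=\max(2-y,\,y-3)$, and take $\alpha=1$ with $x=4$, giving $\alpha h(x)=1$. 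Then $Q_1(x)=(-1,2)$ and $Q_2(x)=(1,4)$ overlap on $(1,2)$, and any law of $F(x,u,\xi)$ charging $(1,2)$ makes the right-hand side strictly larger than the left. What is true, and what your setup already essentially delivers, is this: whenever $\alpha h(x)\le 0$ one has $Q_i(x)\subseteq P_i$, so pairwise disjointness of the $P_i$ transfers to the events $\{\Dh_i(x,u,\xi)<0\}$, their pairwise intersections are null, and subadditivity upgrades to additivity. This covers $\alpha=0$ (the choice made in all of the paper's experiments) and any $x$ with $h(x)\le 0$. So your first guess is the correct resolution: the equality statement, and the paper's proof of it, implicitly assume $\alpha h(x)\le 0$, and an honest version of the lemma should carry that hypothesis.
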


Combining \cref{lem:bound-single,lem:union-bound},
we obtain a sufficient condition for the \ac{CBF} condition \eqref{eq:p-cbf},
which boils down to a tightening of the linear constraints by means of a
control-independent quantile. 
\begin{proposition} \label{prop:combined-bound}
	Let $x \in \R^\ns$, $u \in U$, $\delta \in (0, 1)$,
	and $\delta^{(i)} \in (0, 1)$, $i \in [\np]$,
	with $\sum_{i \in [\np]} \delta^{(i)} \leq \delta$.
	If for each $i \in [\np]$, there exists a $j \in [\nf_i]$,
	such that
	\begin{equation} \label{eq:constraint}
		c_{ij}^\top f(x,u) - \bt_{ij}(x) \geq 
    -\fullquant{x, \xi}
	\end{equation}
	then,
	\(
	\prob_\xi\{ h(F(x,u,\xi)) < \alpha h(x) \} \leq \delta.
	\)
\end{proposition}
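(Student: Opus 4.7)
My plan is to chain \cref{lem:bound-single} and \cref{lem:union-bound} together facet-by-facet and then convert the resulting linear-in-$\xi$ probability into the quantile appearing in \eqref{eq:constraint} via the definition of $q_{\delta^{(i)}}$.

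First, by the definition \eqref{eq:d-h} of $\Dh_i$, the event $\{h(F(x,u,\xi)) < \alpha h(x)\}$ coincides with $\{\Dh(x,u,\xi) < 0\}$. Applying \cref{lem:union-bound} therefore reduces the claim to establishing $\prob_\xi\{\Dh_i(x,u,\xi) < 0\} \leq \delta^{(i)}$ for every $i \in [\np]$, since summing these bounds and invoking $\sum_i \delta^{(i)} \leq \delta$ yields the desired inequality.

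Fix $i \in [\np]$ and let $j$ be the index picked out by the hypothesis \eqref{eq:constraint}. \cref{lem:bound-single} gives
\[
  \prob_\xi\{\Dh_i(x,u,\xi) < 0\} \leq \prob_\xi\{c_{ij}^\top F(x,u,\xi) < \bt_{ij}(x)\}.
\]
Using the decomposition $F(x,u,\xi) = f(x,u) + g(x,\xi)$ from \eqref{eq:dynamics}, the right-hand side equals $\prob_\xi\{c_{ij}^\top g(x,\xi) < \bt_{ij}(x) - c_{ij}^\top f(x,u)\}$, and \eqref{eq:constraint} exactly says that the threshold $\tau \dfn \bt_{ij}(x) - c_{ij}^\top f(x,u)$ satisfies $\tau \leq q_{\delta^{(i)}}(Z)$ where $Z \dfn c_{ij}^\top g(x,\xi)$.

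The only non-routine step — and the one I expect to require the most care — is the implication: if $\tau \leq q_{\delta}(Z)$, then $\prob\{Z < \tau\} \leq \delta$. This is not immediate because $q_{\delta}$ is defined through the \emph{weak} inequality $\prob\{Z \leq \cdot\} \geq \delta$, while we need to control a \emph{strict} inequality. I would argue by left-continuity: for every $\tau' < q_{\delta}(Z)$, the infimum characterization of the quantile forces $\prob\{Z \leq \tau'\} < \delta$; letting $\tau' \uparrow \tau$ and using continuity of probability from below gives $\prob\{Z < \tau\} \leq \delta$. Specializing to $\delta = \delta^{(i)}$ for each $i$ and summing completes the proof.
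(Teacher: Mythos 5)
Your proof is correct and follows essentially the same route as the paper's: reduce via \cref{lem:union-bound} to per-polyhedron bounds, apply \cref{lem:bound-single} to pass to a single facet, and convert \eqref{eq:constraint} into a probability bound using the definition of the quantile. The one point worth noting is that your handling of the quantile step is actually \emph{more} careful than the paper's: the paper asserts that the weak-inequality event $E_{ij}=\{c_{ij}^\top F(x,u,\xi)\leq \bt_{ij}(x)\}$ has probability at most $\delta^{(i)}$, which can fail when the threshold equals the quantile and the distribution has an atom there, whereas your left-continuity argument bounds the strict-inequality event $\prob\{Z<\tau\}\leq\delta^{(i)}$ directly --- which is exactly what \cref{lem:bound-single} requires, and is the correct statement in general (the paper's own remark about discrete and mixed distributions tacitly acknowledges this edge case).
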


\begin{proof}
	Using \eqref{eq:shorthands}, we have for any $i \in [\np]$,
	\begin{align*}
		\nonumber
		                               & \prob_{\xi}\{ h_i(F(x,u,\xi)) < \alpha h(x) \} \leq \delta^{(i)}                                      \\
		\nonumber
		\labelrel{\impliedby}{impl:1}  & \exists j \in [\nf_i]: \prob_{\xi}\big\{c_{ij}^\top F(x,u,\xi) < \bt_{ij}(x) \big\} \leq \delta^{(i)} \\
		\nonumber
		\labelrel{\impliedby}{equiv:2} & \exists j \in [\nf_i]: \eqref{eq:constraint} \text{ holds.}
	\end{align*}
	Here, implication \eqref{impl:1} follows from \cref{lem:bound-single}.
	To show implication \eqref{equiv:2},
	let $E_{ij}^- = \{\xi \mid c_{ij}^\top F(x,u,\xi) < \bt_{ij}(x) \} \subseteq
		E_{ij} = \{\xi \mid c_{ij}^\top F(x,u,\xi) \leq \bt_{ij}(x) \}$, so
    \( \prob( E_{ij}^- ) \leq \prob ( E_{ij}) \).
  If  \eqref{eq:constraint} holds, then by definition of the quantile
  $\quantile$, $\prob\{ E_{ij} \} \leq \delta^{(i)}$.
	Therefore, if for all $i \in [\np]$, there exists $j \in [\nf_i]$
	such that \eqref{eq:constraint} holds, then by \cref{lem:union-bound}
	and \eqref{eq:shorthands},
	\[
		\prob_\xi \{ h(F(x, u, \xi)) < \alpha h(x) \} \leq \tsum_{i \in [\np]} \delta^{(i)} \leq \delta.\qedhere
	\]
\end{proof}

\begin{remark}
	Note that implication \eqref{equiv:2} in the paper becomes an equality for continuous distributions.
	For discrete and mixed distributions, \eqref{eq:constraint} can be slightly improved by
	replacing $-\quantile[\delta^{(i)}]{(c_{ij}^\top g(x, \xi))}$ with $\quantile[1 - \delta^{(i)}]{(-c_{ij}^\top g(x,\xi))}$.
\end{remark}

It follows that by enforcing condition
\eqref{eq:constraint} %
at every time step $k \in [N]$, we can control the $N$-step exit probability.
For many standard distributions, the quantiles involved in \eqref{eq:constraint}
can be easily computed. 
For more general approaches, we propose an extension that replaces the quantile by
data-driven estimators in \cref{sec:data-driven}.

\begin{corollary} \label{cor:specs}
	Fix $\epsilon, \delta \in (0,1)$ such that \eqref{eq:condition-eps-delta-const} holds.
	If a policy $\pi = (\kappa_k)_{k \in [N]}$ exists, such that
	for all $x \in S$ and $k \in [N]$,
	the conditions of \cref{prop:combined-bound}
	hold with $u=\kappa_k(x)$, %
	then $h$ is a $\delta$-probabilistic \ac{CBF} and $P_N(x; \pi) \leq \epsilon$.
\end{corollary}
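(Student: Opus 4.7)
The plan is to chain together the three earlier results in the paper. First I would invoke \cref{prop:combined-bound}: by assumption, for every $x \in S$ and every $k \in [N]$, the hypotheses of that proposition hold with $u = \kappa_k(x) \in U$, so we immediately conclude that
\[
  \prob_\xi\{ h(F(x, \kappa_k(x), \xi)) < \alpha h(x) \} \leq \delta,
\]
or equivalently $\prob_\xi\{ h(F(x,\kappa_k(x),\xi)) \geq \alpha h(x) \} \geq 1 - \delta$.

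Next I would rephrase this as a witness for \cref{def:prob-cbf}: taking $u = \kappa_k(x)$ as the required input (any $k$ works, since the statement is time-invariant in $x$), we see that for every $x \in S$ there exists $u \in U$ such that \eqref{eq:p-cbf} is satisfied. Hence $h$ qualifies as a $\delta$-\acs{PCBF} with the same constant $\alpha$ that was fixed implicitly in \eqref{eq:shorthands}.

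Finally, because $\epsilon, \delta$ have been chosen to satisfy \eqref{eq:condition-eps-delta-const} and the policy $\pi = (\kappa_k)_{k \in [N]}$ satisfies \eqref{eq:p-cbf} at every step by construction, \cref{prop:relation-delta-epsilon} applies directly and yields $P_N(x; \pi) \leq \epsilon$ for all $x \in S$.

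The proof is essentially a bookkeeping step: no new estimates are needed beyond what has been established. The only subtlety to be careful about is making sure the $\alpha$ used throughout the three invocations is the same, and that the policy witnesses \eqref{eq:p-cbf} in the uniform sense required by \cref{def:prob-cbf} — but both are immediate from the hypotheses, since $\kappa_k(x) \in U$ provides the quantified input and $\alpha$ is fixed a priori in the shorthand \eqref{eq:shorthands}. There is no genuine obstacle.
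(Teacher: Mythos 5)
Your proposal is correct and takes essentially the same route as the paper's proof: apply \cref{prop:combined-bound} pointwise in $x \in S$ to verify the $\delta$-\acs{PCBF} condition \eqref{eq:p-cbf}, then invoke \cref{prop:relation-delta-epsilon}. The paper's version additionally spells out that, along the random state sequence, the conditional probability given $x_k \in S$ coincides with $\prob_\xi$ (using the i.i.d.\ assumption), but this is the same bookkeeping you describe.
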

\begin{proof}
	Let $(x_k)_{k \in [N]}$ be the random state sequence generated by \eqref{eq:dynamics},
	with $u_k = \kappa_k(x_k)$.
	If $x_k \in S$, then, by assumption, \eqref{eq:constraint} holds.
	\Cref{prop:combined-bound} then implies
	that
	\[
		\begin{aligned}
			     & \prob\{ h(F(x_k, \kappa_k(x_k), \xi_k)) < \alpha h(x_k) \mid x_k \in S \}  \\
			= \; & \prob_\xi \{ h(F(x_k, \kappa_k(x_k), \xi)) < \alpha h(x_k) \} \leq \delta.
		\end{aligned}
	\]
	The claim then follows from applying \cref{prop:relation-delta-epsilon}.
\end{proof}

\subsection{Safety filter design}

Informed by the results in \cref{sec:sufficient-conditions},
we now proceed with the design of a policy satisfying
the conditions of \cref{cor:specs}.
Typically, a policy enforcing \acp{CBF}-based constraints
is implemented by means of a
so-called \textit{safety filter}: For a given state $x$, the control
action is computed by projecting the inputs of a given base policy at $x$
onto the set of inputs satisfying the \ac{CBF} conditions.
Although many different objectives for the control actions could be
considered without changing the safety properties of the controller, we will
adopt this construction here for simplicity.

More precisely, given the current state $x$, a base policy
$\bar{\kappa}: \R^n \to \R^m$,
and tolerances $\delta^{(i)}$, $i \in [\np]$,
\eqref{eq:constraint}
we define the \textit{safety filter policy} as
$\kappa(x) \in \Pi(x)$ with
\begin{align} \label{eq:safety-filter-general}
	\Pi(x) = \argmin_{u \in U} & \; \nrm{ u - \bar{\kappa}(x)}^2                                                        \\
	\nonumber\sttshort   & \; \max_{j \in [\nf_i]} c_{ij}^\top f(x,u) - q_{ij}(x) \geq 0, \; \forall i \in [\np], 
\end{align}
where
\begin{equation}\label{eq:quantile-shorthand}
	q_{ij}(x) =  \bt_{ij}(x) - \fullquant{x,\xi}.
\end{equation}
In this work, we will assume $h$ to be given and well-designed, in
the sense that $\dom \Pi \supseteq S$,
ensuring that $\kappa$ is well-defined for all $x \in S$.

Problem \eqref{eq:safety-filter-general}
can be formulated as a mixed integer program by the introduction
of binary variables $s_{ij} \in \{0, 1\}$:
\begin{subequations} \label{eq:mip}
	\begin{align}
		\minimize_{u \in U, s_{ij} \in \{0, 1\}} \; & \nrm{ u - \bar{\kappa}(x)}^2                                            \\
		\sttshort\;\;
		                                                   & s_{ij} ( q_{ij}(x) -c_{ij}^\top f(x,u) ) \leq 0,  \label{binary}        \\
		                                                   & \tsum_{j' \in [\nf_i]} s_{ij'} \geq 1, \quad i \in [\np], j \in [\nf_i]
	\end{align}
\end{subequations}
If $f$ is control-affine, i.e. $f(x, u) = A(x) + B(x) u$, 
and $U$ is polyhedral,
then problem \eqref{eq:mip}
can be reformulated as a \ac{MIQP}
via the well-known big-M reformulation:
Replacing \eqref{binary} with
$\tilde{q}_{ij}(x) - c_{ij}^\top B(x) u \leq M_{ij} (1 - s_{ij})$,
where $\tilde{q}_{ij} = q_{ij} - c_{ij}^\top A(x)$, and $M_{ij} \gg 0$ some sufficiently large real numbers.

In general, however, problem \eqref{eq:mip} is a
mixed-integer nonlinear program, which may challenge the
real-time capabilities of the method,
in particular on embedded hardware.

To address this potential computational bottleneck,
we now describe a simple alternative approach.
The goal is to provide the same safety guarantees at a lower
computational cost.
According to \cref{cor:specs}
it suffices to associate with each $i \in [\np]$,
an index $j \in [\nf_i]$, such that \eqref{eq:constraint} holds
for the applied control action $u$.

Let $J$ denote the set of all possible assignments of an index
$j \in [\nf_i]$ to each $i \in [\np]$, i.e.,
\begin{equation}\label{eq:J-set}
	J = \{ \jj: \N^{\np} \mid j_i \in [\nf_i], i \in [\np] \}.
\end{equation}
Given such a $\jj = (j_i)_{i\in[\np]} \in J$,
let
\begin{subequations} \label{eq:safety-filter-approx}
	\begin{align}
		\hat{\Pi}_{\jj}(x) = \argmin_{u \in U} & \; \nrm{ u - \bar{\kappa}(x)}^2                                   \\
		\sttshort                        & \; c_{ij_i}^\top f(x,u) \geq q_{ij_i}(x), \; \forall i \in [\np].
		\label{eq:constraints-sf-approx}
	\end{align}
\end{subequations}
where $q_{ij}(x)$ is given by \eqref{eq:quantile-shorthand}.
By construction, any $u \in \hat{\Pi}_j(x)$ satisfies \eqref{eq:constraint}.
Therefore, the problem reduces to finding a $\jj \in J$, such that
$\hat{\Pi}_{\jj}(x)$ is nonempty.

\begin{algorithm}
	\caption{Piecewise-affine probabilistic safety filter}
	\label{alg:main-method}
	\begin{algorithmic}[1]
		\Require $\epsilon, \delta \in (0,1)$ satisfying \eqref{eq:condition-eps-delta-const},
		base policy $\bar\pi=(\bar\kappa_k)_{k\in[N]}$
		\For{$k=1,\dots,N$}
		\State $J_k \gets J$
		\State Observe state $x_k$
		\State Select $(\delta^{(i)})_{i\in[\np]}$ s.t. $\sum_{i \in [\np]}\delta^{(i)} \leq \delta$ \label{line:select-deltas} \Comment{Cf. \S\ref{sec:risk}}
		\While{$J_k \neq \emptyset$} \label{line:while}
		\State Select $\jj \in J_k$ and $J_k \gets J_k \setminus \{\jj\}$ \label{line:select-j} \Comment{Cf. \S\ref{sec:index-search}}
		\State $U_k = \hat{\Pi}_{\jj}(x_k)$  \Comment{Cf. \eqref{eq:safety-filter-approx}}
		\If{$U_k \neq\emptyset$}
		\State \label{line:projection}$u_k \in U_k$
		\State \textbf{break}
		\ElsIf{$J_k = \emptyset$}
		\Return Infeasible at time $k$
		\EndIf
		\EndWhile
		\EndFor
	\end{algorithmic}
\end{algorithm}

This conceptual procedure is summarized in \cref{alg:main-method}.
Note that some freedom remains in the exact implementation
of lines \ref{line:select-deltas}
and \ref{line:select-j}.
Even though they do not affect the guarantee of \cref{cor:specs},
they can greatly affect the practical performance of the method (both in
terms of computational cost and conservatism).

\subsubsection{Risk allocation} \label{sec:risk}
\Cref{alg:main-method} does not
specify how the violation tolerances $\delta^{(i)}$, $i \in [\np]$ that enter
problem \eqref{eq:safety-filter-approx} through \eqref{eq:quantile-shorthand}, should be chosen.
\Cref{cor:specs} only requires their sum to be below $\delta$,
but permits us to freely choose them.
For simplicity, in the rest of this paper we take
\begin{equation}\label{risk_alloc}
\delta^{(i)} = \frac{\delta}{\np}, \; \forall i \in [\np].
\end{equation}
In future work, we will explore better ways to allocate this risk, as this simple formulation may introduce unnecessary conservatism.

\subsubsection{Search over index $j$} \label{sec:index-search}
Although \eqref{eq:safety-filter-approx} will very often
be a small-scale optimization problem (a convex \ac{QP} in the case of control-affine
dynamics),
the overall complexity of \cref{alg:main-method}
is also determined by the number of iterations in the inner loop
(cf. line \ref{line:while} in \cref{alg:main-method}).
Since $|J| = \prod_{i \in [\np]}\nf_i$, the worst-case complexity can grow
exponentially in the number of constituent polyhedra $P_i$
(one exception being when $\nf_i = 1$ for all $i$, i.e., $P_i$ are halfspaces).

Fortunately, the number of effective trials can be vastly
reduced in practice by checking the most promising candidates first.
We propose the following heuristic for iterating over the set $J$.
Under the assumption that $f(x,u)$ will tend to be close to $x$,
we can approximate constraints \eqref{eq:constraints-sf-approx} by
the corresponding sections of the piecewise affine barrier function
\begin{equation}\label{eq:approximate-constraint}
	h_{ij}(x) = c_{ij}^\top x - b_{ij} \geq 0.
\end{equation}
It then stands to reason that if
\eqref{eq:approximate-constraint}
is satisfied for all $(i, j_i)_{i\in[\np]}$
(for some $\jj = (j_i)_{i \in [\np]} \in J$),
then we also expect $\hat{\Pi}_{\jj}(x)$ to be nonempty.
Thus, the first candidate sequence %
should consist of the halfspaces of each polyhedron $P_i$,
for which constraint violation in the current state $x$ is lowest.
To this end,
let $\sigma_i= (\sigma_{ik})_{k \in [\nf_i]}$, $i \in [\np]$, denote the permutation of the index set
$[\nf_i]$ that sorts the sequence $(h_{ij}(x))_{j \in [\nf_i]}$
in decreasing order, i.e.,
\(
\hat{h}_{i\sigma_{ik}} \geq \hat{h}_{i\sigma_{ik+1}}\;  \forall k \in [\nf_i-1], i \in [\np].
\)

In case of infeasibility, we then first proceed by
testing the other halfspaces of the set $P_i$ that has the largest
violation, since we expect to gain most from swapping a constraint that $x$
already (nearly) violates.
That is, we sort the indices $i \in [\np]$ in
decreasing order of $h_i(x)$ (cf. \eqref{eq:pw-lin-barrier}),
resulting in the sequence $\ii = (i_k)_{i \in [\np]}$, so that
\(
h_{i_{k}}(x) \geq h_{i_{k+1}}(x)
\)
We then obtain successive candidate sequences $(j_1, \dots, j_{\np}) \in J$ by iterating
over the set $\bigtimes_{i \in \ii} \sigma_i$ in lexicographic order,
ensuring that we check that halfspaces of $P_{i_{\np}}$ (corresponding to the largest violations)
in the innermost loop.
This procedure is illustrated in \cref{fig:sorting}. In the depicted configuration,
the search procedure first checks all facets of $P_1$ before moving on to the
next candidate for $P_2$.

Despite this greedy search method being a heuristic, we have found empirically
that it allows us to find a feasible collection of indices within one or two
inner iterations. For this reason it is useful in practice to limit
the number of inner iterations to some small amount.
\begin{remark}
	We have observed comparable performance by replacing $x$ by $f(x, \bar{\kappa}(x))$
	(i.e., the predicted successor state under the base policy),
	in the construction above.
\end{remark}
\begin{figure}[htb]
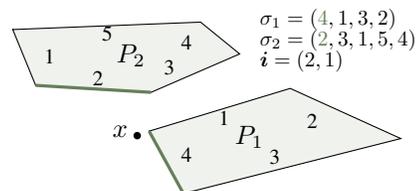

	\centering
	\includetikz{example-ordering}
	\caption{Example of the search heuristic described in \cref{sec:index-search}.
		At the first iteration, $\jj = (4, 2)$ would be selected, as shown in the figure,
		since this combination provides the least violation in the current state.
		In the case of infeasibility, the next candidates are
		$(1, 2), (3,2), (2,2), (4,3), (4, 3), (1, 3), \dots, (2, 4).$%
	}
	\label{fig:sorting}
\end{figure}

\section{Data-driven extensions} \label{sec:data-driven}
Since the disturbance distribution is often unknown in practice,
requiring the knowledge of its quantile $\quantile$
in \eqref{eq:quantile-shorthand} may be restrictive.
In this scenario, we can replace the quantile in the right-hand side of
condition \eqref{eq:constraint} by a data-driven estimator,
to obtain a \textit{probabilistic \ac{CBF} with high-confidence}.
This then allows us to provide the same guarantees as before, at the cost
of some additional conservatism to account for the lack of knowledge of the
distribution. For readability, we defer some longer proofs to \cref{sec:appendix}.

\subsection{General case}

Let \(\D=(\xi_t)_{t \in [\nsamp]}\) denote a dataset, drawn independently from
$\prob_\xi$. Note that $\mathcal{D}$ is distributed
according to the product measure $\prob_{\xi}^{\nsamp}$. Let 
$\Dsp \dfn \supp(\D)$ be the support of $\D$.

\begin{definition}[{\cite[Def. 3]{mestres_ProbabilisticControlBarrier_2025}}]
	Let $\delta, \gamma \in (0 ,1)$.
	Function $h$ is a \ac{PCBF} with high confidence, denoted $(\delta, \gamma)$-\acs{PCBF} if
	for all $x \in S$, there exists $\D$-measurable $u \in U$, such that
	\begin{equation} \label{eq:cbf-hc}
		\prob_{\xi}^\nsamp \Big\{
		\prob_{\xi}\big\{ h(F(x,u,\xi)) \geq \alpha h(x) \big\} \geq 1 - \delta
		\Big\} \geq 1 - \gamma.
	\end{equation}
\end{definition}

To establish sufficient conditions for \eqref{eq:cbf-hc},
we utilize the following basic result.
\begin{lemma}\label{lem:quantile-bound}
	Let $\delta, \gamma \in (0, 1)$, be given, and
	let $Z_\tau$, $\tau \in [\nsamp]$ be i.i.d. scalar random
	variables, with law $\prob_Z$.
	If
  \begin{equation}\label{eq:dmin}
		\nsamp > \minsmp(\gamma, \delta) \dfn \tfrac{\log(\gamma)}{\log(1 - \delta)},
	\end{equation}
	then $\tau = \Fb^{-1}(\gamma; \nsamp, \delta) \in [\nsamp]$, and
	\[
    \prob_{Z}^\nsamp\{ Z_{(\tau)} \leq \quantile[\delta](Z) \} \geq 1 - \gamma,
	\]
	where $\Fb^{-1}(\gamma; \nsamp, \delta)$
	denotes the $\gamma$-quantile of the binomial distribution $\bin(\nsamp, \delta)$.
\end{lemma}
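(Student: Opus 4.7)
The plan is to reduce the statement about the order statistic $Z_{(\tau)}$ to a statement about a binomial tail probability, and then use the definition of the binomial quantile, combined with a stochastic dominance argument, to obtain the claimed bound.

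First I would introduce the random variable $B = \sum_{t \in [\nsamp]} \mathbf{1}\{ Z_t \leq q_\delta(Z) \}$ counting how many samples lie below the population quantile. Setting $p \dfn \prob_Z\{ Z \leq q_\delta(Z) \}$, the i.i.d.\ assumption yields $B \sim \bin(\nsamp, p)$, and right-continuity of the CDF together with the definition of $q_\delta$ gives $p \geq \delta$. The key combinatorial observation is the equivalence
\[
  Z_{(\tau)} \leq q_\delta(Z) \iff B \geq \tau,
\]
so that $\prob_Z^\nsamp\{ Z_{(\tau)} \leq q_\delta(Z) \} = \prob\{ B \geq \tau \}$.

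Next I would invoke stochastic dominance: since $p \geq \delta$, $\bin(\nsamp, p)$ stochastically dominates $\bin(\nsamp, \delta)$, so if $B' \sim \bin(\nsamp, \delta)$ then $\prob\{ B \geq \tau \} \geq \prob\{ B' \geq \tau \} = 1 - \Fb(\tau - 1; \nsamp, \delta)$. By the definition of the quantile $\tau = \Fb^{-1}(\gamma; \nsamp, \delta) = \inf\{ t \mid \Fb(t; \nsamp, \delta) \geq \gamma\}$, provided $\tau \geq 1$ the strict inequality $\Fb(\tau - 1; \nsamp, \delta) < \gamma$ holds, and the desired bound $1 - \gamma$ follows.

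To close the argument I need to verify the membership $\tau \in [\nsamp]$, which is where the sample size condition \eqref{eq:dmin} enters. The upper bound $\tau \leq \nsamp$ is immediate from $\Fb(\nsamp; \nsamp, \delta) = 1 \geq \gamma$. For the lower bound, I would rewrite $\nsamp > \log(\gamma) / \log(1 - \delta)$ as $(1 - \delta)^{\nsamp} < \gamma$ (being careful about the sign flip since $\log(1-\delta) < 0$), and note that $\Fb(0; \nsamp, \delta) = (1 - \delta)^{\nsamp} < \gamma$, so by definition of the infimum $\tau \geq 1$.

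The only mildly delicate step is the stochastic dominance claim together with the strict inequality $\Fb(\tau - 1; \nsamp, \delta) < \gamma$, since one must be careful about whether $\tau$ is an integer and whether the quantile is attained; but because $\Fb$ is a right-continuous step function with jumps precisely at the integers $\{0, 1, \dots, \nsamp\}$, $\tau$ is itself integer-valued and the inequality is strict at $\tau - 1$. Everything else is bookkeeping, so I do not anticipate any serious obstacle.
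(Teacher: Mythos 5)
Your proposal is correct and follows essentially the same route as the paper: both reduce the order-statistic event to the binomial count $B = \#\{t : Z_t \leq q_\delta(Z)\}$ via the equivalence $\{Z_{(\tau)} \leq q_\delta(Z)\} = \{B \geq \tau\}$, use monotonicity of the binomial CDF in its success probability (which is exactly your stochastic-dominance step) together with $F_Z(q_\delta(Z)) \geq \delta$, and then use the strict sample-size condition to get $\Fb(0;\nsamp,\delta) = (1-\delta)^{\nsamp} < \gamma$ and hence $\tau \geq 1$. No gaps.
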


Given $\D$, we construct for each $i \in [\np]$, $j \in [\nf_i]$, the sample
\begin{equation}\label{eq:def-Zij}
	Z^{ij}(x) = (Z_{t}^{ij}(x))_{t \in [\nsamp]} \text{ with }
	Z_t^{ij}(x)= c_{ij}^\top g(x,\xi_t).
\end{equation}
With this notation in place, \cref{lem:quantile-bound}
implies the following.

\begin{proposition} \label{prop:data-driven-ij}
	Let $\gamma,\delta \in (0, 1)$, $x \in \R^n$ be fixed,
	and let $\nftot \dfn \sum_{i \in [\np]} \nf_i$ denote the sum of the
	number of halfspaces of the polyhedra $P_i$, and allocate 
\(\delta^{(i)}\) as in \eqref{risk_alloc}. If
	\begin{equation}\label{eq:nsample-1}
		\nsamp > \max_{i \in [\nf]} \minsmp(\tfrac{\gamma}{\nftot}, \delta^{(i)}),
	\end{equation}
	then $\tau_{i} = \Fb^{-1}(\tfrac{\gamma}{\nftot}; \nsamp, \delta^{(i)}) \in [\nsamp]$, $i \in [\np]$,
	and
	\[
		\prob_{\xi}^{\nsamp}
		\big\{
		Z_{(\tau_i)}^{ij}(x) \leq
		\fullquant{x, \xi}, i \in [\np], j \in [\nf_i]
		\big\}
		\geq 1 - \gamma.
	\]
\end{proposition}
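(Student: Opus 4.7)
The statement is essentially a uniform version of \cref{lem:quantile-bound} over the finite collection of pairs $(i,j)$ with $i \in [\np]$, $j \in [\nf_i]$, obtained by a union bound. The plan is therefore: (i) apply \cref{lem:quantile-bound} pointwise to each scalar sample $Z^{ij}(x)$, with a tightened tolerance $\gamma/\nftot$ in place of $\gamma$; (ii) verify that the sample-size condition \eqref{eq:nsample-1} is exactly what is needed for each such invocation; (iii) conclude by a union bound across the $\nftot$ pairs.

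\textbf{Step 1 (pointwise application).} Fix $(i,j)$ with $i \in [\np]$, $j \in [\nf_i]$. Since $\D = (\xi_t)_{t \in [\nsamp]}$ is i.i.d.\ under $\prob_{\xi}^{\nsamp}$, the sample $Z^{ij}(x) = (c_{ij}^\top g(x,\xi_t))_{t \in [\nsamp]}$ defined in \eqref{eq:def-Zij} consists of i.i.d.\ scalar random variables with law equal to the pushforward of $\prob_\xi$ by $\xi \mapsto c_{ij}^\top g(x,\xi)$. Applying \cref{lem:quantile-bound} to this sample with confidence parameter $\gamma/\nftot$ and level $\delta^{(i)}$, we obtain that, provided $\nsamp > \minsmp(\gamma/\nftot, \delta^{(i)})$, the index $\tau_i = \Fb^{-1}(\gamma/\nftot; \nsamp, \delta^{(i)})$ lies in $[\nsamp]$, and
\[
\prob_{\xi}^{\nsamp}\bigl\{ Z^{ij}_{(\tau_i)}(x) \leq \fullquant{x,\xi} \bigr\} \geq 1 - \gamma/\nftot.
\]
Note that $\tau_i$ depends on $i$ only (through $\delta^{(i)}$), not on $j$, which is consistent with the statement.

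\textbf{Step 2 (sample size).} The condition \eqref{eq:nsample-1} requires $\nsamp > \max_{i \in [\np]} \minsmp(\gamma/\nftot, \delta^{(i)})$, which in particular implies $\nsamp > \minsmp(\gamma/\nftot, \delta^{(i)})$ for every $i$, so the hypothesis of \cref{lem:quantile-bound} is met uniformly across $i$, and hence across all the $\nftot$ pairs $(i,j)$.

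\textbf{Step 3 (union bound).} Let $A_{ij}$ denote the complementary event $\{Z^{ij}_{(\tau_i)}(x) > \fullquant{x,\xi}\}$. By Step 1, $\prob_{\xi}^{\nsamp}(A_{ij}) \leq \gamma/\nftot$ for each pair. Since $\nftot = \sum_{i \in [\np]} \nf_i$ counts exactly the number of such pairs, the union bound gives
\[
\prob_{\xi}^{\nsamp}\Bigl( \tsum_{i \in [\np]} \tsum_{j \in [\nf_i]} A_{ij} \Bigr) \leq \nftot \cdot \tfrac{\gamma}{\nftot} = \gamma,
\]
and passing to the complement yields the claimed bound.

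\textbf{Main obstacle.} There is no substantive difficulty; the argument is a routine lift from pointwise to uniform guarantees. The only thing to be careful about is the bookkeeping on indices: $\tau_i$ is shared across all facets $j$ of a given polyhedron (since $\delta^{(i)}$ is constant in $j$), while the tightened confidence $\gamma/\nftot$ must be allocated over all $\nftot$ facet-pairs, not merely over the $\np$ polyhedra. Getting this allocation consistent with \eqref{eq:nsample-1} is the only point that requires attention.
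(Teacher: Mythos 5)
Your proof is correct and follows essentially the same route as the paper's: apply \cref{lem:quantile-bound} to each scalar sample $Z^{ij}(x)$ with confidence tightened to $\gamma/\nftot$, then combine the $\nftot$ pointwise guarantees via a union bound over the complementary events. The only blemish is notational: in Step 3 the argument of $\prob_{\xi}^{\nsamp}$ should be the \emph{union} of the events $A_{ij}$, not their sum.
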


This provides a data-driven counterpart to \cref{cor:specs}:
\begin{lemma} \label{cor:conditions-dd}
  Let $\kappa_{\D}: \R^{\ns} \to U$ be a data-dependent control law.
  If for all $\D \in \Dsp$,
	and for all $x \in S$, there exists $\jj = (j_i)_{i \in [\nf]} \in J$ with
	\begin{equation} \label{eq:constraint-dd}
		c_{ij_i}^\top f(x,\kappa_\D(x)) - \bt_{ij_i}(x) \geq -Z^{ij_i}_{(\tau_i)}(x),
	\end{equation}
	where $\tau_i$ is defined as in \cref{prop:data-driven-ij};
	Then, $h$ given by \eqref{eq:pw-lin-barrier} is a
	$(\delta, \gamma)$-\acs{PCBF}.
\end{lemma}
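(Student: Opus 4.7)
The plan is to combine the data-driven quantile bound of \cref{prop:data-driven-ij} with the sufficient condition of \cref{prop:combined-bound}. Fix an arbitrary $x \in S$ and consider the event
\[
E(x) = \big\{ Z^{ij}_{(\tau_i)}(x) \leq \fullquant{x, \xi}, \; \forall i \in [\np], j \in [\nf_i] \big\}
\]
that the empirical order statistics dominate the true quantiles simultaneously for every facet of every polyhedron $P_i$. Provided the sample-size condition \eqref{eq:nsample-1} holds, \cref{prop:data-driven-ij} gives $\prob_\xi^{\nsamp}(E(x)) \geq 1 - \gamma$.

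Now let $\D \in E(x)$. By hypothesis, there exists $\jj = (j_i)_{i \in [\np]} \in J$ for which \eqref{eq:constraint-dd} holds at $u = \kappa_\D(x) \in U$. Chaining \eqref{eq:constraint-dd} with the inequality defining $E(x)$ yields, for every $i \in [\np]$,
\[
c_{ij_i}^\top f(x, \kappa_\D(x)) - \bt_{ij_i}(x) \geq -Z^{ij_i}_{(\tau_i)}(x) \geq -\fullquant{x, \xi},
\]
which is precisely condition \eqref{eq:constraint} of \cref{prop:combined-bound} for the assignment $\jj$ and control $u = \kappa_\D(x)$. Since $\sum_{i \in [\np]} \delta^{(i)} \leq \delta$ by \eqref{risk_alloc}, \cref{prop:combined-bound} then yields
\[
\prob_\xi\{ h(F(x,\kappa_\D(x),\xi)) \geq \alpha h(x) \} \geq 1 - \delta.
\]

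Combining this with the outer bound on $E(x)$ gives
\[
\prob_\xi^{\nsamp} \big\{ \prob_\xi\{ h(F(x,\kappa_\D(x),\xi)) \geq \alpha h(x) \} \geq 1-\delta \big\} \geq \prob_\xi^{\nsamp}(E(x)) \geq 1 - \gamma,
\]
which is exactly \eqref{eq:cbf-hc} at the fixed $x$. Since $x \in S$ was arbitrary and $\kappa_\D$ is $\D$-measurable by construction, $h$ is a $(\delta, \gamma)$-\acs{PCBF}.

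The main subtlety lies in the interplay between the data-dependence of the index selection $\jj$ (and hence of the control $\kappa_\D(x)$) and the outer probability over $\D$. The argument goes through precisely because \cref{prop:data-driven-ij} delivers a \emph{single} high-probability event on which all facet inequalities hold simultaneously, so any $\D$-measurable selection of $\jj$ automatically inherits the bound; had Proposition~\ref{prop:data-driven-ij} instead controlled one facet at a time, a $\D$-dependent selection of $\jj$ would break the conditioning and require a further union bound over the combinatorially many assignments in $J$.
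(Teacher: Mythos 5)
Your proof is correct and follows essentially the same route as the paper: fix $x \in S$, define the high-probability event on which all order statistics dominate the true quantiles, apply \cref{prop:combined-bound} on that event, and bound its probability via \cref{prop:data-driven-ij}. Your closing remark about why a $\D$-dependent choice of $\jj$ does not break the argument is a useful clarification the paper leaves implicit, but it does not change the underlying argument.
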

\begin{proof}
	Fix $x \in S$,
	and define the event $E_x = \{ \D : Z_{(\tau_i)}^{ij} \leq \fullquant{x, \xi} \}$.
	If $\D \in E_x$, then \eqref{eq:constraint-dd}
	implies \eqref{eq:constraint}.
	Therefore, by \cref{prop:combined-bound},
	\(
	\prob_{\xi}^{\nsamp} \big\{
	\prob_\xi\{ h(F(x,\kappa_\D(x),\xi)) \geq \alpha h(x) \} \geq 1 - \delta.
	\big\} \geq \prob_{\xi}^{\nsamp} (E_x).
	\)
	Finally, by \cref{prop:data-driven-ij}, $\prob_{\xi}^{\nsamp} (E_x) \geq 1 - \gamma$,
	for all $x \in S$.
\end{proof}
From this, we finally obtain the desired guarantee.
\begin{proposition}[{\cite[Prop. 6]{mestres_ProbabilisticControlBarrier_2025}}] \label{prop:dd-guarantee-general}
	Fix $\epsilon \in (0,1)$.
	If $h$ is a $(\delta, \gamma)$-\acs{PCBF}, with $\delta$ satisfying
	\eqref{eq:condition-eps-delta-const}
	and $\gamma \leq \nicefrac{\tilde{\gamma}}{N}$, then, for any (data-dependent) control law
  $\kappa_{\D}: \R^n \to U$, satisfying \cref{cor:conditions-dd},
	\begin{equation} \label{eq:safety-hc}
		\prob_{\xi}^\nsamp \Big\{
		P_N(x, \kappa_\D) \leq \epsilon
		\Big\} \geq 1 - \tilde{\gamma}, \; \forall x \in S.
	\end{equation}
\end{proposition}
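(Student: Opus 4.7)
The plan is to lift the per-state high-confidence guarantee behind the $(\delta,\gamma)$-\acs{PCBF} condition to the entire $N$-step trajectory via a union bound over time, and then to invoke the iterative reasoning of \cref{prop:relation-delta-epsilon} sample-path-wise to convert the stagewise tightness into the $N$-step exit-probability bound. The overall structure mirrors that of \cref{cor:specs}, with the uniform deterministic guarantee there replaced by a confidence-$(1-\tilde\gamma)$ guarantee over the dataset $\D$.

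First I would unpack what the hypotheses provide. Because $\kappa_{\D}$ is assumed to satisfy \cref{cor:conditions-dd}, the inequality \eqref{eq:constraint-dd} holds deterministically for every realization of $\D$ and every $x\in S$; the only probabilistic slack comes from whether the empirical order statistic $Z^{ij_i}_{(\tau_i)}(x)$ upper-bounds the true quantile $\quantile[\delta^{(i)}](c_{ij_i}^\top g(x,\xi))$. For each fixed $x$, \cref{prop:data-driven-ij} (together with the risk allocation \eqref{risk_alloc} and the choice $\gamma\leq\tilde\gamma/N$) shows this bound holds with $\prob_{\xi}^{\nsamp}$-probability at least $1-\gamma$, and combining with \cref{prop:combined-bound} yields the stagewise condition $\prob_{\xi}\{h(F(x,\kappa_{\D}(x),\xi))\geq\alpha h(x)\}\geq 1-\delta$ at that $x$ with the same confidence.

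The second, and most delicate, step is to transfer this pointwise guarantee to the random sequence of visited states $(x_k)_{k\in[N]}$ under the closed-loop dynamics. For each $k$, let $E_k$ denote the event that the empirical-quantile bound, hence the $\delta$-\acs{PCBF} condition, holds at $x_k$. Exploiting the independence of $\D$ from the trajectory noise $(\xi_k)_{k\in[N]}$, I would express $\prob_{\xi}^{\nsamp}(E_k^c)$ as an average (over the noise history, which is independent of $\D$) of pointwise applications of \cref{prop:data-driven-ij}, yielding $\prob_{\xi}^{\nsamp}(E_k^c)\leq\gamma$. A union bound over $k\in[N]$, together with $\gamma\leq\tilde\gamma/N$, then gives $\prob_{\xi}^{\nsamp}\bigl(\bigcap_{k\in[N]} E_k\bigr)\geq 1-\tilde\gamma$.

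Finally, on the event $\bigcap_{k\in[N]} E_k$ the $\delta$-\acs{PCBF} condition holds at every state visited along the trajectory, so the iterative argument underlying \cref{prop:relation-delta-epsilon} (as used in the proof of \cref{cor:specs}) yields $P_N(x,\kappa_{\D})\leq\epsilon$ for every such $\D$, whence $\prob_{\xi}^{\nsamp}\{P_N(x,\kappa_{\D})\leq\epsilon\}\geq 1-\tilde\gamma$. The main obstacle is the trajectory-level step: a direct application of \cref{prop:data-driven-ij} to the random state $x_k$ is not immediately legitimate because $x_k$ depends on $\D$ through the closed-loop control law, so this step requires careful probability-space bookkeeping that exploits the independence of the data from the trajectory noise rather than a literal invocation of the pointwise estimate.
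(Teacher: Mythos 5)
The paper offers no proof of this proposition to compare against: it is imported verbatim from the cited reference. Judged on its own terms, your outline does match the mechanism the paper gestures at in the discussion following \cref{prop:data-driven-ij-indep} --- the factor $N$ in $\gamma \leq \tilde{\gamma}/N$ exists precisely to union-bound, over the $N$ time steps, the probability that the order-statistic estimate fails at the state visited at that step. (Minor sign slip along the way: the favourable event is $Z^{ij_i}_{(\tau_i)}(x) \leq q_{\delta^{(i)}}(c_{ij_i}^\top g(x,\xi))$, i.e., the order statistic must \emph{not exceed} the true quantile, not ``upper-bound'' it.)

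However, the step you yourself flag as delicate is a genuine gap, and the fix you sketch does not close it, for two reasons. First, conditioning on the noise history does not decouple $x_k$ from $\D$: the visited state is a function of both $\xi_{0:k-1}$ and $\D$ (through $\kappa_\D$ applied at earlier steps), so after fixing the noise you are left with $\prob_{\xi}^{\nsamp}\{\D \notin E_{\phi(\D)}\}$ for a $\D$-dependent state $\phi(\D)$, which \cref{prop:data-driven-ij} --- a statement for a fixed, deterministic $x$ --- does not bound; in principle $\phi$ could select, for each dataset, a state at which that particular dataset's estimate fails. Second, there is a type mismatch in your concluding step: your events $E_k$ (``the bound holds at $x_k$'') live on the product space of data and process noise, whereas $P_N(x,\kappa_\D)$ is, for each fixed $\D$, an integral over \emph{all} noise realizations. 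To conclude $P_N(x,\kappa_\D)\leq\epsilon$ for a given $\D$ you need the stagewise condition at every state the trajectory ensemble can visit under that $\D$, not merely along one realized path; hence $\bigcap_k E_k$ cannot simultaneously be a $\D$-measurable event of probability at least $1-\tilde{\gamma}$ obtained from the pointwise bounds \emph{and} an event on which the $N$-step bound holds deterministically. A complete argument must either work on the joint space throughout (bounding a joint exit probability and then restating the claim), or exhibit a $\D$-measurable good event on which the quantile estimates are valid uniformly over the relevant states --- which, for state-dependent $g$, is exactly the difficulty that \cref{prop:data-driven-ij-indep} sidesteps by making $Z^{ij}$ independent of $x$. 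As written, your proposal supplies neither.
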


\subsection{Special case: \texorpdfstring{$g(x,\xi) = \xi$}{g(x,xi)=xi}}
In many practical applications, it can be assumed that the disturbance is
independent of the state, i.e., $g(x, \xi) = \xi$. In this case, we can obtain 
significantly improved results, in particular for long horizons $N$.
Note that $Z^{ij}$ defined in \eqref{eq:def-Zij} is now
independent of $x$.
We have the following alternative to
\cref{prop:dd-guarantee-general}.

\begin{proposition} \label{prop:data-driven-ij-indep}
	Fix $\epsilon, \tilde{\gamma},\delta \in (0,1)$, with $\delta$ satisfying
	\eqref{eq:condition-eps-delta-const}.
  Let $\kappa_\D: \R^{\ns} \to U$ be a data-dependent control law.
  If for all $\D \in \Dsp$, and
	$x \in S$, there exists $\jj = (j_i)_{i \in [\nf]} \in J$ with
	\begin{equation} \label{eq:constraint-dd-indep}
		c_{ij_i}^\top f(x,\kappa_\D(x)) - \bt_{ij_i}(x) \geq -Z^{ij_i}_{(\tau_i)},
	\end{equation}
	where $\tau_i$ is defined as in \cref{prop:data-driven-ij},
  \(\delta^{(i)}\) satisfying \eqref{risk_alloc} and \(\gamma=\tilde{\gamma}\),
	then \eqref{eq:safety-hc} holds.

\end{proposition}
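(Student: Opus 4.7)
The plan is to exploit the structural simplification that when $g(x,\xi)=\xi$, the scalar samples $Z^{ij}_t = c_{ij}^\top \xi_t$ no longer depend on the state, so the order statistic $Z^{ij}_{(\tau_i)}$ is a deterministic functional of the dataset $\D$ alone. This allows the single ``good event'' on which every empirical order statistic majorizes its true $\delta^{(i)}$-quantile to be chosen once and for all, uniformly over $S$ and over time. Consequently, the union bound over the horizon $N$ that forced $\gamma \leq \tilde{\gamma}/N$ in \cref{prop:dd-guarantee-general} becomes unnecessary, and $\gamma = \tilde{\gamma}$ is enough.

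Concretely, I would first define
\[
  E \dfn \bigl\{ \D : Z^{ij}_{(\tau_i)} \leq q_{\delta^{(i)}}(c_{ij}^\top \xi),\ \forall i \in [\np],\ j \in [\nf_i] \bigr\},
\]
and apply \cref{prop:data-driven-ij}, which in this setting bounds $\prob_\xi^{\nsamp}(E) \geq 1 - \gamma = 1 - \tilde{\gamma}$ without any dependence on $x$. Next, I would fix an arbitrary $\D \in E$ and argue pointwise over $x \in S$ that \eqref{eq:constraint-dd-indep} implies the deterministic inequality \eqref{eq:constraint} with the true quantile on the right-hand side. \Cref{prop:combined-bound} then gives $\prob_\xi\{h(F(x,\kappa_\D(x),\xi)) \geq \alpha h(x)\} \geq 1 - \delta$ for every $x \in S$, so $h$ is a genuine $\delta$-\acs{PCBF} under the (now deterministic) control law $\kappa_\D$. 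Applying \cref{prop:relation-delta-epsilon} with $\delta$ satisfying \eqref{eq:condition-eps-delta-const} yields $P_N(x;\kappa_\D) \leq \epsilon$ for all $x \in S$, and combining with $\prob_\xi^\nsamp(E) \geq 1 - \tilde{\gamma}$ produces \eqref{eq:safety-hc}.

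The subtle point I would flag, and the place where the state-independence assumption is essential, is that $\kappa_\D$ depends on the realized dataset, so a priori one might worry that the random trajectory $(x_k)_{k\in[N]}$ visits states at which the data-driven quantile bound fails. This concern is dissolved precisely by the state-independence of $Z^{ij}$: once $\D \in E$, the implication \eqref{eq:constraint-dd-indep}${}\Rightarrow{}$\eqref{eq:constraint} is valid for \emph{every} $x \in S$ simultaneously, so the chain of reasoning via \cref{prop:combined-bound} and \cref{prop:relation-delta-epsilon} applies along any trajectory produced by $\kappa_\D$, regardless of which (data-dependent) states it visits. This is the structural advantage absent from the general case and is what allows the sharper sample requirement.
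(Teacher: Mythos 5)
Your proof is correct and follows essentially the same route as the paper: define the single state-independent good event $E$ on which every order statistic $Z^{ij}_{(\tau_i)}$ lower-bounds the true quantile, invoke Proposition~\ref{prop:data-driven-ij} to get $\prob_\xi^{\nsamp}(E)\geq 1-\tilde{\gamma}$, and on $E$ pass from \eqref{eq:constraint-dd-indep} to \eqref{eq:constraint} uniformly over $x\in S$ before concluding via the known-quantile machinery. The only cosmetic difference is that you unfold Corollary~\ref{cor:specs} into its two ingredients (Propositions~\ref{prop:combined-bound} and~\ref{prop:relation-delta-epsilon}), whereas the paper cites the corollary directly; your closing remark on why state-independence removes the $\gamma\leq\tilde{\gamma}/N$ tightening matches the paper's own discussion.
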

\begin{proof}
	Define the event $E = \{ \D:
		Z_{(\tau_i)}^{ij} \leq
		\quantxi{\xi}, i \in [\np], j \in [\nf_i]
		\}$.
	If $\D \in E$, then \eqref{eq:constraint-dd-indep} implies
	\eqref{eq:constraint} for all $x \in S$.
	Thus, by \cref{cor:specs},
	$\D \in E \implies P_{N}(x_0; \kappa_\D) \leq \epsilon,\, \forall x \in S$.
	Therefore, 
  \[
    \prob_{\xi}^\nsamp (P_{N}(x_0;\kappa_\D) \leq \epsilon) \geq \prob_{\xi}^\nsamp(E) \geq 1 - \tilde{\gamma},
  \]
	where the last inequality follows from \cref{prop:data-driven-ij}.
\end{proof}
Note that the main difference between 
\cref{prop:dd-guarantee-general,prop:data-driven-ij-indep}
is that the former requires a tightening of the confidence level: $\gamma \leq \tfrac{\tilde{\gamma}}{N}$,
whereas the latter does not.
The reason for this is that the now state-\textit{independent} quantile estimates
only have to be computed once, whereas in general,
it is computed again for each $x_k$, $k \in [N]$. Therefore,
the estimator could fail to lower-bound the true quantile at every time step $k$,
necessitating the condition on $\gamma$ to compensate.

\section{Numerical Examples} \label{sec:numerical}
We now present several numerical experiments demonstrating the
features of the proposed method and comparing it to existing methods
in the literature.
All optimization problems are solved using the \cvxpy{} \cite{diamond2016cvxpy} interface unless stated otherwise.
In all experiments, we set $\alpha = 0$ in our method.
\subsection{Quadruped navigation through a narrow corridor}  \label{sec:corridor}
We consider a setup similar to
\cite{cosner_RobustSafetyStochastic_2023,mestres_ProbabilisticControlBarrier_2025},
where the goal is to navigate a quadrupedal robot through a narrow corridor.
Let $R_{z}(\theta) \in \R^{3\times 3}$ represent the rotation matrix corresponding 
to a rotation of angle $\theta$ around the $z$ axis in $\R^3$.
The robot is modeled by the dynamics
\begin{equation}
	x_{k+1} = x_{k} + \Delta t\, R_z(\theta_k) u_k + \xi_k ,
\end{equation}
where \(x_k =  (p^x_k, p^y_k, \theta_k) \) consists
of the planar position [\si{\meter}] and the heading angle,
and $u_k = (v_k^x, v_k^y, \omega_k)$ contains setpoints for
velocity (in $x$ and $y$) [\si{\meter\per\second}] and the
angular velocity [\si{\radian\per\second}]
of the robot (around the $z$ axis), respectively.
The safe set is given by $S = \{ (p^x, p^y, \theta) \mid |p^y| \leq 0.5 \}$,
which in \cite{cosner_RobustSafetyStochastic_2023,mestres_ProbabilisticControlBarrier_2025} 
is modeled using the quadratic \ac{CBF} \(\hQ(x) = 0.5^{2} - (p^y)^{2}\).
Since our approach is specialized to piecewise-linear functions, 
we instead use \(\hpwl(x) = \min\{-p^y + 0.5,\; p^y + 0.5 \}\) as our 
candidate \ac{CBF},
(using $\np = 2$ polyhedra $P_i$).
As the base policy we use \(\bar{\kappa}: (p^x, p^y, \theta) \mapsto (0.2,\,1,\,-\theta) \),
which was deliberately chosen to be unsafe (i.e., driving the nominal system towards the walls of
the corridor) to emphasize the effects of the safety filter.
Following \cite{mestres_ProbabilisticControlBarrier_2025},
we use
\(\xi_k\sim\mathcal{N}(0,\Sigma)\), with
\(\Sigma=\diag(0,\sigma^2,0)\), \(\Delta t=\SI{0.1}{\second}\) and \(N=20\).

\subsubsection{Known disturbance distribution}

Assuming the distribution of $\xi$ is known, our method can be compared
to the martingale approach outlined in
\cite{cosner_RobustSafetyStochastic_2023} and the method proposed by Fushimi et al.
\cite[Thm. 11]{fushimi_SafetyCriticalControlDiscretetime_2025}, which we
refer to as the \textit{\cosner{} method} and the \textit{\fushimi{} method},
respectively. Both methods require the selection of some hyperparameters, as 
we now describe.

The \cosner{} approach refers specifically to \cite[eq.~(ED)]{cosner_RobustSafetyStochastic_2023}, 
which requires evaluating an expected value. The authors also propose an approximate 
reformulation based on Jensen's inequality, but since the expectation can be computed 
exactly in our setting, this approximation is unnecessary. In this formulation, a 
hyperparameter \(\alpha \in (0,1)\) must be selected.

Under the policy \(\pi\) generated by this method, 
\cite[Thm.~5]{cosner_RobustSafetyStochastic_2023} guarantees
\(P_N(x;\pi) \le 1 - \tfrac{h(x)}{\bar{h}}\, \alpha^N\),
where \(h\) is assumed to be bounded above by 
\(\bar{h} = \sup_x h(x)\). 
For \(h \equiv \hQ\), we have \(\bar{h} = 0.5^2\).
We select \(\alpha\) by equating the right-hand side to the desired 
exit probability \(\epsilon\) and solving for \(\alpha\).

The \fushimi{} method refers in particular to \cite[Thm. 11]{fushimi_SafetyCriticalControlDiscretetime_2025}, 
which involves two hyperparameters ($a$ and $\betaF$).
We select these hyperparameters by following the authors'
approach in \cite[\S 5.2]{fushimi_SafetyCriticalControlDiscretetime_2025}.
For a thusly defined policy $\pi$,
the exit probability is bounded by \cite[eq. (48)]{fushimi_SafetyCriticalControlDiscretetime_2025}
\begin{equation}\label{Pfushimi}
	P_N(x;\pi) \leq \hat{P}(x; a, \betaF) = \exp(-a h(x))+\betaF N.
\end{equation}
Here, the parameters $a \geq 1$ and $\betaF \geq 0$ are to be selected so that
\begin{inlinelist*}
  \item\label{item:feasible} the respective safety filter condition (\cite[eq. (47)]{fushimi_SafetyCriticalControlDiscretetime_2025})
    is feasible for all \(x \in S\)
  \item\label{item:tight} \(\hat{P}(x; a, \betaF)\) is (tightly) upper-bounded by \(\epsilon\).
\end{inlinelist*}
In this example, \cref{item:feasible} is satisfied if
\begin{equation}\label{eq:fushimi-feas}
  a \bar{h} \geq
-\log(\exp(-a \bar{h})+\betaF)
-\tfrac{1}{2}\log\det\!\left(I + \Sigma aA\right),
\end{equation}
where \( A = \diag (0, -2, 0 )\)
is the Hessian of \(h(x)\).
\Cref{item:feasible,item:tight} can thus be achieved by solving
\[ 
  \maximize_{a \geq 1, \betaF \geq 0} \hat{P}(x; a, \betaF) \sttshort \hat{P}(x; a, \betaF) \leq \epsilon, \eqref{eq:fushimi-feas} \text{ holds.}
\]
This problem may have different solutions, leading to different empirical 
exit probabilities. For instance, for any $a \geq 1$, we can
solve $\hat{P}(x; a, \betaF) = \epsilon$ analytically for $\betaF$. 
Any such combination that satisfies \eqref{eq:fushimi-feas} is 
an optimal solution.

We will compare the described methods on two main metrics:
\begin{inlinelist*}
	\item the tightness of the exit probability bound
	\item feasibility as a function of the tolerance \(\epsilon\) and noise level \(\sigma\)
\end{inlinelist*}
To evaluate tightness, we use Monte Carlo simulation to estimate the exit probability 
starting from \(x_0 = 0\), with noise level \(\sigma = 0.03\) and different
exit probability tolerances \(\epsilon\). %
The results are reported in \cref{tab:tightness-results}.
Although all methods achieve the desired 
safety levels, the proposed method 
more closely approximates the desired tolerance $\epsilon$, 
despite the naive selection of $\delta^{(i)} = \nicefrac{\delta}{\np}$.
In this example, this choice is expected to overestimate the empirical failure probability by approximately a factor of~2.

\begin{table}[ht]
  \vspace{7pt}
	\centering
	\caption{Empirical exit probabilities over 5000 independent runs. \\(\(N=20\), \(x_0=0\), \(\sigma=0.03\), known noise distribution)}
	\label{tab:tightness-results}
	\renewcommand{\arraystretch}{1.25}
  {
  \setlength{\tabcolsep}{3pt}
	\begin{tabular}{lcccc}
		\toprule
		 & \(\epsilon = 0.2\)
		 & \(\epsilon = 0.1\)
		 & \(\epsilon = 0.01\)
		 & \(\epsilon = 0.001\)                            \\
		\midrule
		\cosner{} \cite{cosner_RobustSafetyStochastic_2023}
		 & 0.0036               & 0.0002 & /      & /      \\
		\fushimi \cite{fushimi_SafetyCriticalControlDiscretetime_2025} (a=20)
		 & 0.0434                & 0.0132 & 0 & / \\
		\fushimi \cite{fushimi_SafetyCriticalControlDiscretetime_2025} (a=35)
		 & 0.0602               & 0.0276 & 0.0014 & 0      \\
		\fushimi \cite{fushimi_SafetyCriticalControlDiscretetime_2025} (a=50)
		 & 0.0544               & 0.0278 & 0.0024 & 0.0002 \\
		Ours       & \textbf{0.0766}               & \textbf{0.0374} & \textbf{0.0046} & 0.0002 \\
		\bottomrule
	\end{tabular}
  }
	\begin{minipage}[t]{\linewidth}
    \vspace{1pt}
		\raggedright
		{\footnotesize
     `/' signifies infeasibility.
		}
	\end{minipage}
\end{table}

Next, we test feasibility of the methods from initial state
\(
x_0 = 0
\).
In \cref{fig:infeasibilities}, the feasibility results for different 
$\sigma$--$\epsilon$ combinations are shown.
The proposed method yields the largest feasible region,
indicating that it is able to handle higher amounts of noise with the same guarantees.

\begin{figure}[htb]
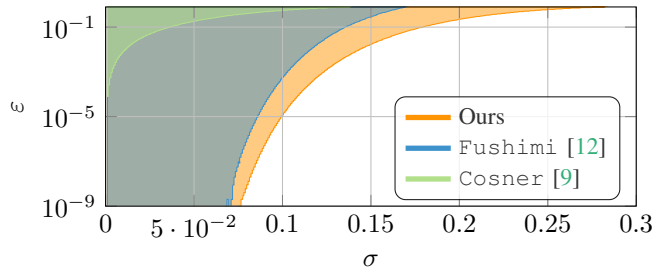

	\centering
	\includetikz{infeas_finegrain}
  \vspace{-7pt}
	\caption{Combinations of the noise standard deviation $\sigma$ and the tolerated 20-step exit probability $\epsilon$
		for which the different methods are feasible.
	}
  \vspace{-5pt}
	\label{fig:infeasibilities}
\end{figure}

\subsubsection{Unknown disturbance distribution} \label{sec:unknown-dist}
We now present the data-driven variant
and compare it to the conformal and scenario approach in
\cite{mestres_ProbabilisticControlBarrier_2025}.
The setting is identical to before, except that the distribution of
$\xi$ is assumed to be unknown and we instead have a data sample
$\mathcal{D} = (\xi_t)_{t \in [\nsamp]}$
with $\xi_t \sim \normal(0, \Sigma)$.
\Cref{tab:compare-dd} compares the tightness and computational complexity of the different methods for \(\gamma = 0.01\) (as in \cite{mestres_ProbabilisticControlBarrier_2025}),
corresponding to \(\tilde{\gamma} = 0.2\), where \(\nsamp\) is chosen such that all methods are feasible.
Since the scenario approach requires solving \acp{QCQP}, we use \Clarabel{} \cite{goulart2024clarabel} for both this method and ours,
while the mixed-integer formulation of the conformal approach is solved using \Gurobi{} \cite{gurobi} through its Python interface.
Note that even though we use the quantile estimator of \cref{prop:data-driven-ij-indep},
we do recompute the quantile in every time step to simulate the computational 
growth in the general case of \cref{prop:data-driven-ij}.

We observe that our method remains significantly less conservative and less computationally 
demanding. The conformal approach becomes prohibitively expensive as $\epsilon$ decreases.
The main reason for this is that the conformal quantile estimator
depends on the decision variable, since the safe set is modeled using a quadratic $h$,
yielding a mixed-integer problem with a number of binary variables increasing
linearly with the sample size \(\nsamp\).
In contrast, in our method,
the quantile estimator is independent of the decision variable (cf. \eqref{eq:constraint}).
For the scenario approach, the number of constraints increases
linearly with the sample size \(\nsamp\), resulting in a more manageable growth of
its computation time. Nevertheless, it is not real-time capable for large sample sizes $\nsamp$.

\begin{table}[htb]
	\centering
	\caption{
		Comparison with \cite{mestres_ProbabilisticControlBarrier_2025} over 500 independent runs.
		(\(\tilde{\gamma}=0.2\),~\(N=20\),~\(x_0=0\),~\(\sigma=0.03\),~unknown distribution)}
	\label{tab:compare-dd}
	\begin{tabular}{llcccc}
		\toprule
		Method
		     & Metric
		     & \(\epsilon = 0.9\) & \(\epsilon = 0.5\) & \(\epsilon = 0.1\)
		\\
		\midrule
		     & \(\nsamp\)         & 254                & 2580               & 108430               \\
		\midrule
		Conform. \cite{mestres_ProbabilisticControlBarrier_2025}
         & $\hat{P}_{N}$      & 0.164               & 0.058            & /\textsuperscript{$\star$}                    \\
   & $T_{\text{avg}}$   & 27.79              & \num{1.05e3}       & \num{1.85e6} \\
		     & $T_{\text{max}}$   & 244.0             & \num{9.52e3}       & \num{9.75e6} \\
		\midrule
		Scen. \cite{mestres_ProbabilisticControlBarrier_2025}
         & $\hat{P}_{N}$      & 0.058               & 0.004            &       0             \\
    & $T_{\text{avg}}$   &1.12              & 4.91       &  390.27\\
		     & $T_{\text{max}}$   & 5.9             & 9.89       & 539.78 \\
		\midrule
		Ours & $\hat{P}_{N}$      & 0.436              & 0.206              & 0.038                \\
      & $T_{\text{avg}}$   & 0.86               & 0.87               & 1.33                 \\
		     & $T_{\text{max}}$   & 2.99            & 7.11              & 3.19               \\
		\bottomrule
	\end{tabular}
	\begin{minipage}[t]{\linewidth}
		\vspace{2pt}
		{\raggedright
      \footnotesize
			$T_{\text{avg}}$, $T_{\text{max}}$: average and maximum execution times [\si{\milli\second}]\\
			$\hat{P}_N$: empirical exit probabilities.\\
			$^\star$Stopped after 10 time steps due to excessive computation time.
		}
	\end{minipage}
\end{table}

Next, we validate our data-driven approach by simulating
under different heavy-tailed distributions, namely
$\xi_t \sim \laplace(0, \Sigma_{\text{H}})$, with
$\xi_t \sim \student_{\nu}(0, \Sigma_{\text{H}})$, with
$\Sigma_{\text{H}} = \diag(0, \sigma, 0)$, and $\nu = 8$.
\Cref{fig:coverage-heavy-tails} shows the
empirical exit probabilities $\hat{P}_N$ 
over 2000 independent runs (using $\tilde{\gamma} = 0.01$, $\epsilon =0.1$).
As expected, we observe that as $\nsamp$ increases, 
$\hat{P}_N$ increases towards $\epsilon$
due to the binomial distribution involved in \cref{prop:data-driven-ij} 
concentrating more closely around its mean.
It is clear that under more heavily-tailed distributions,
the violation probabilities lie slightly closer to the tolerance.
Since no tail assumptions are made by the estimator, this effect is expected
to be more pronounced for even more heavy-tailed distributions.
\begin{figure}[ht!]
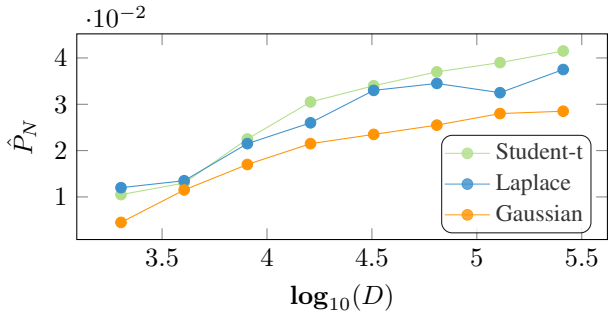

	\centering
	\includetikz{coverage_heavy_tails}
  \caption{Empirical exit probabilities $\hat{P}_N$ in the experiment of \cref{sec:unknown-dist}.
	}
	\label{fig:coverage-heavy-tails}
\end{figure}

\subsection{General path planning}\label{sec:path-planning}

To demonstrate the scalability and modeling flexibility of the
proposed approach, we next consider a larger path planning scenario.
For convenience, we consider the same dynamics as in \cref{sec:corridor}.
However, in this case, we consider a more elaborate environment, consisting
of 13 irregular polyhedral obstacles in $\R^2$, representing a narrow, cluttered
pathway.
The base policy simply tracks a given reference trajectory using a time-invariant
piecewise linear control law, disregarding the obstacles on its path.
\Cref{fig:path-planning-positions}
shows position trajectories of the robot over 30 independent simulations.
We simulate with $\xi_k \sim \normal(0, \Sigma)$, with 
$\Sigma = \diag(\sigma^2, \sigma^2, 0)$, where $\sigma = 0.03$.
The $N$-step exit probability was set to
$\epsilon = 0.1$, with $N = 150$ time steps.
We bound the control actions by $\| u_k \|_{\infty} \leq 5$.
It is apparent from \cref{fig:path-planning-positions} that the
safety filter is able to successfully correct the unsafe trajectories of the
base controller by steering the robot around the obstacles, without
maintaining overly conservative safety margins.
\begin{figure}[htb]
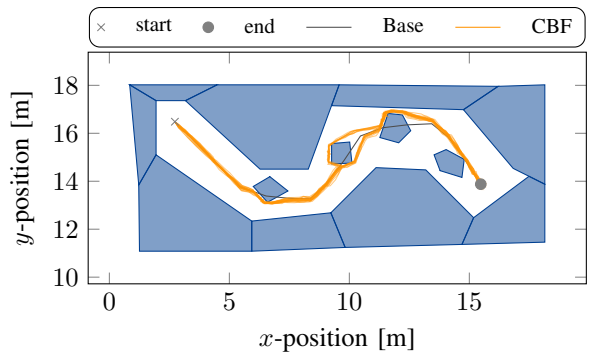

	\centering
	\includetikz{corridor-with-obstacles}
  \vspace{-2pt}
  \caption{State trajectories in the path planning example of \cref{sec:path-planning}.}
	\label{fig:path-planning-positions}
  \vspace{-5pt}
\end{figure}

Using this test case, we will now compare the proposed approximate
scheme (\cref{alg:main-method})
with the alternative approach of directly 
solving the \ac{MIQP} \eqref{eq:mip} using the big-M reformulation.
\newcommand{\theur}{t^{\textrm{QP}}}%
\newcommand{\tmi}{t^{\textrm{MI}}}%
We repeat the path planning experiment with the \ac{MIQP}
formulation and record the time to solve the problem using
\Gurobi{} \cite{gurobi}, resulting
in $T = 150 \times 30 = 4500$ timings $\tmi_{k}$, $k \in [T]$.
Similarly, we denote by $\theur_{k}$, $k \in [T]$,
the measured execution times of the heuristic proposed in
\cref{alg:main-method}, where we solve the QPs using \QPALM{} \cite{hermans2022qpalm}.
We find that $\theur_k$ was
\SI{5.4}{\milli\second} on average with a maximum of \SI{17.3}{\milli\second}
whereas, the $\tmi_k$ was
\SI{7.3}{\milli\second} on average with a maximum of \SI{44.6}{\milli\second}.
We recorded an average speedup $s_k = \nicefrac{\tmi_k}{\theur_k}$ of 1.4, a maximum of 8.5.

\newcommand{\vheur}{v^{\textrm{QP}}}%
\newcommand{\vmi}{v^{\textrm{MI}}}%
On the other hand, \cref{alg:main-method} only computes an approximate solution. 
To measure suboptimality, we simulate the state following the base policy 
\(\bar{u}_k = \bar{\kappa}_k(x_k)\), and for each \(x_k \in S\), we solve the 
safety filter problem using both methods, obtaining costs \(\vheur_k\) and 
\(\vmi_k\), \(k \in [T]\). Let \(\underline{v}_k = \min \{\vheur_k, \vmi_k\}\) 
denote the lowest cost at time \(k\). Discarding time steps where 
\(\underline{v}_k < 10^{-2}\) (i.e., where the safety filter made no substantial 
modification), the relative suboptimality of \cref{alg:main-method} is computed as 
\(\tfrac{\vmi_k - \underline{v}_k}{\underline{v}_k}\).  
We find that in 90\% of the remaining cases, the relative suboptimality is below 
\(10^{-7}\), while the largest value is 16.81, indicating that in rare cases the 
heuristic selects a different set of facets for the polyhedral obstacles, 
resulting in qualitatively different corrections. Note, however, when including all time 
steps, over 99\% record negligible suboptimality (below \(10^{-7}\)).

\section{Conclusion and future work}\label{sec:conclusion}
We proposed a safety filter design for
discrete-time stochastic systems, utilizing
piecewise affine probabilistic \acp{CBF},
which are highly expressive, yet computationally manageable.
We derived tractable sufficient conditions to ensure
a given upper bound on the $N$-step exit probability 
(with both known and unknown noise distributions),
and demonstrated that it is less conservative than comparable methods in the literature.
Furthermore we illustrate the flexibility of the method
for modeling complex safety sets.

\bibliographystyle{hieeetr}
\bibliography{references}

\begin{thebibliography}{10}

\bibitem{brunke_SafeLearningRobotics_2022}
L.~Brunke, M.~Greeff, A.~W. Hall, Z.~Yuan, S.~Zhou, J.~Panerati, and A.~P. Schoellig, ``Safe learning in robotics: From learning-based control to safe reinforcement learning,'' {\em Annual Review of Control, Robotics, and Autonomous Systems}, vol.~5, pp.~411--444, May 2022.

\bibitem{wieland_ConstructiveSafetyUsing_2007}
P.~Wieland and F.~Allg{\"o}wer, ``Constructive safety using control barrier functions,'' in {\em 7th {{IFAC Symposium}} on {{Nonlinear Control Systems}}}, pp.~462--467, Jan. 2007.

\bibitem{ames_ControlBarrierFunctions_2019}
A.~D. Ames, S.~Coogan, M.~Egerstedt, G.~Notomista, K.~Sreenath, and P.~Tabuada, ``Control barrier functions: Theory and applications,'' in {\em 2019 18th {{European Control Conference}}}, pp.~3420--3431, 2019.

\bibitem{wang2025safesurv}
S.~Wang and S.~Wen, ``Safe control against uncertainty: A comprehensive review of control barrier function strategies,'' {\em IEEE Systems, Man, and Cybernetics Magazine}, vol.~11, no.~1, pp.~34--47, 2025.

\bibitem{li2023survey}
B.~Li, S.~Wen, Z.~Yan, G.~Wen, and T.~Huang, ``A survey on the control {Lyapunov} function and control barrier function for nonlinear-affine control systems,'' {\em IEEE/CAA Journal of Automatica Sinica}, vol.~10, no.~3, pp.~584--602, 2023.

\bibitem{agrawal_discrete_2017}
A.~Agrawal and K.~Sreenath, ``Discrete control barrier functions for safety-critical control of discrete systems with application to bipedal robot navigation,'' in {\em Robotics: {Science} and {Systems} {XIII}}, July 2017.

\bibitem{breeden2023robust}
J.~Breeden and D.~Panagou, ``Robust control barrier functions under high relative degree and input constraints for satellite trajectories,'' {\em Automatica}, vol.~155, p.~111109, 2023.

\bibitem{liu2025robust}
C.~Liu, A.~Alan, S.~Shi, and B.~De~Schutter, ``Robust adaptive discrete-time control barrier certificate,'' 2025, arXiv: \href{https://arxiv.org/abs/2508.08153}{{\ttfamily 2508.08153}}.

\bibitem{cosner_RobustSafetyStochastic_2023}
R.~Cosner, P.~Culbertson, A.~Taylor, and A.~Ames, ``Robust safety under stochastic uncertainty with discrete-time control barrier functions,'' in {\em Robotics: {{Science}} and {{Systems XIX}}}, July 2023.

\bibitem{cosner_generative_2023}
R.~K. Cosner, I.~Sadalski, J.~K. Woo, P.~Culbertson, and A.~D. Ames, ``Generative modeling of residuals for real-time risk-sensitive safety with discrete-time control barrier functions,'' in {\em {{IEEE International Conference}} on {{Robotics}} and {{Automation}}}, pp.~i--viii, 2024.

\bibitem{cosner_bounding_2024}
R.~K. Cosner, P.~Culbertson, and A.~D. Ames, ``Bounding stochastic safety: Leveraging {{Freedman}}'s inequality with discrete-time control barrier functions,'' {\em IEEE Control Systems Letters}, vol.~8, pp.~1937--1942, 2024.

\bibitem{fushimi_SafetyCriticalControlDiscretetime_2025}
S.~Fushimi, K.~Hoshino, and Y.~Nishimura, ``Safety-critical control for discrete-time stochastic systems with flexible safe bounds using affine and quadratic control barrier functions,'' Jan. 2025, arXiv: \href{https://arxiv.org/abs/2501.09324}{{\ttfamily 2501.09324}}.

\bibitem{wang_safe_2025}
X.~Wang, T.~Kim, B.~Hoxha, G.~Fainekos, and D.~Panagou, ``Safe navigation in uncertain crowded environments usin risk adaptive {CVaR} barrier functions,'' 2025, arXiv: \href{https://arxiv.org/abs/2504.06513}{{\ttfamily 2504.06513}}.

\bibitem{kishida_risk-aware_2024}
M.~Kishida, ``Risk-aware control of discrete-time stochastic systems: Integrating {Kalman} filter and worst-case {CVaR} in control barrier functions,'' in {\em 2024 IEEE 63rd Conference on Decision and Control (CDC)}, pp.~2019--2024, IEEE, 2024.

\bibitem{mestres_ProbabilisticControlBarrier_2025}
P.~Mestres, B.~Werner, R.~K. Cosner, and A.~D. Ames, ``Probabilistic control barrier functions: Safety in probability for discrete-time stochastic systems,'' Oct. 2025, arXiv: \href{https://arxiv.org/abs/2510.01501}{{\ttfamily 2510.01501}}.

\bibitem{mazouz_piecewise_2025}
R.~Mazouz, L.~Laurenti, and M.~Lahijanian, ``Piecewise control barrier functions for stochastic systems,'' July 2025.
\newblock arXiv:2507.17703.

\bibitem{cavorsi2020tractable}
M.~Khajenejad, M.~Cavorsi, R.~Niu, Q.~Shen, and S.~Z. Yong, ``Tractable compositions of discrete-time control barrier functions with application to driving safety control,'' in {\em 2021 {{European Control Conference}} ({{ECC}})}, pp.~1303--1309, June 2021.

\bibitem{ovchinnikov_MaxMinRepresentationPiecewise_2002}
S.~Ovchinnikov, ``Max-min representation of piecewise linear functions,'' {\em Contributions to Algebra and Geometry}, vol.~43, no.~1, pp.~297--302, 2002.

\bibitem{diamond2016cvxpy}
S.~Diamond and S.~Boyd, ``{CVXPY}: {A} {P}ython-embedded modeling language for convex optimization,'' {\em Journal of Machine Learning Research}, vol.~17, no.~83, pp.~1--5, 2016.

\bibitem{goulart2024clarabel}
P.~J. Goulart and Y.~Chen, ``Clarabel: An interior-point solver for conic programs with quadratic objectives,'' {\em arXiv preprint arXiv:2405.12762}, 2024.

\bibitem{gurobi}
{Gurobi Optimization, LLC}, ``Gurobi optimizer reference manual,'' 2024.

\bibitem{hermans2022qpalm}
B.~Hermans, A.~Themelis, and P.~Patrinos, ``{QPALM}: a proximal augmented lagrangian method for nonconvex quadratic programs,'' {\em Mathematical Programming Computation}, vol.~14, no.~3, pp.~497--541, 2022.

\end{thebibliography}

\ifArxiv
	\appendix
	\subsection{Deferred proofs}\label{sec:appendix}
	\begin{proof}[Proof of \cref{lem:bound-single}]
	For $i \in [\np]$, $j \in [\nf_i]$ define the events
	\(
	E_i = \{ \xi \mid \Dh_i (x,u,\xi) \geq 0 \}
	\),
	\(
	E_{ij} = \{ c_{ij}^\top(F(x, u, \xi)) \geq \bt_{ij}(x) \}
	\).
	Then,
	by definitions \eqref{eq:pw-lin-barrier} and \eqref{eq:shorthands},
	\begin{align*}
		\prob_{\xi}(E_i) & = \prob_{\xi}\big\{\textstyle \max_{j \in [\nf_i]}
		c_{ij}^\top(F(x, u, \xi)) - \bt_{ij}(x) \geq 0 \big\}                                                     \\
		                 & = \prob_{\xi} \Big (
                     \textstyle\union_{j \in [\nf_i]} E_{ij} \Big) \geq \max_{j \in [\nf_i]} \prob_{\xi}(E_{ij}).
	\end{align*}
	Therefore,
	$\prob_\xi(\compl{E_i}) =
		1 - \prob_\xi(E_i)
		\leq
		1 -
    \max_{j \in [\nf_i]} \prob_{\xi}(E_{ij}) = \min_{j \in [\nf_i]} \prob_{\xi}(\compl{E_{ij}}),
	$
  proving the claim.
\end{proof}

\begin{proof}[Proof of \cref{lem:union-bound}]
	Define the event $\bar{E}_i = \{ \Dh_i(x, u, \xi) < 0 \}$.
	By the union bound, we have
	\begin{align}
    \hspace{-8pt}\nonumber\prob_{\xi} \{ \Dh(x, u, \xi) < 0 \} & = \prob_{\xi} \{ \min_{i \in[\np]} \Dh_i(x,u,\xi) < 0 \}          \\
                                                  & = \prob_{\xi} \big( {\textstyle \union_{i \in [\np]}}  \bar{E}_i \big)
                                                  \leq \tsum_{i \in [\np]} \prob_{\xi} (\bar{E}_i).
		\label{eq:pf-union-bound}
	\end{align}
  By definitions \eqref{eq:pw-lin-barrier}, \eqref{eq:shorthands}, 
  event $\bar{E}_i$ can be characterized by
  $\bar{E}_i = \{ F(x, u, \xi) \in P_i \}$, which implies that
	$\bar{E}_i \cap \bar{E}_{i'} = \{ F(x,u ,\xi) \in P_i \cap P_{i'} \}$. Thus,
	if the collection $\{P_i\}_{i \in [\np]}$ is pairwise disjoint, $P_{i} \cap P_{i'} = \emptyset$
  for any $i \neq i'$.
	Therefore $\prob_{\xi}(\bar{E}_i \cap \bar{E}_{i'}) = 0$ and \eqref{eq:pf-union-bound}
	holds with equality.
\end{proof}

\begin{proof}[Proof of \cref{lem:quantile-bound}]
	We define $N(q) \dfn | \{ i : Z_i \leq q \} |$. Note that $N(q) \sim \bin(\nsamp ; F_Z(q))$.
	By definition of $N(q)$:
	\begin{equation*}
		\begin{aligned}
			\prob \{ Z_{(k)} \leq \quantile[\delta](Z) \} & = \prob \{ N(\quantile[\delta](Z)) \geq k \}       \\
			                                              & = 1 - \prob\{ N(\quantile[\delta](Z)) \leq k-1 \}  \\
			                                              & = 1 - \Fb(k-1; \nsamp, F_Z(\quantile[\delta](Z))). \\
			                                              & \geq 1 - \Fb(k-1; \nsamp, \delta),
		\end{aligned}
	\end{equation*}
	where $k \mapsto \Fb(k; \nsamp, \delta)$ is the \ac{CDF} of $\bin(\nsamp, \delta)$,
	and the inequality follows from the fact that
	\begin{inlinelist*}
		\item $F_Z(q) \geq \delta$
		\item $p \mapsto \Fb(k; \nsamp, p)$ is nonincreasing for all
		$k \leq \nsamp$.
	\end{inlinelist*}
	Thus, if there exists a $k^\star \geq 1$ such that
	\(
		\Fb(k^\star-1; \nsamp, \delta) \leq \gamma,
	\)
	then $ \prob \{ Z_{(k^\star)} \leq \quantile[\delta](Z) \} \geq 1 - \gamma$.
	Since $k \mapsto \Fb(k; \nsamp, \delta)$ is nondecreasing, such a $k^\star$
  exists if and only if
	\(
			\Fb(0; \nsamp, \delta) = (1 - \delta)^{\nsamp} \leq \gamma
			{\iff}                    \nsamp \geq \tfrac{\log(\gamma)}{\log(1 - \delta)}.
	\)
  Further, if this inequality is strict,
  then $k^\star = \Fb^{-1}(\gamma; \nsamp, \delta) \geq 1$ and  
	$\Fb(k^\star-1; \nsamp, \delta) \leq \gamma$.
\end{proof}

\begin{proof}[Proof of \cref{prop:data-driven-ij}]
  Fix $x \in \R^{\ns}$,
  and let 
	$E_{ij}(x) =
  \{ Z_{(k_i)}^{ij} \leq \fullquant{x, \xi} 
  \}$
	for $i \in [\np]$ $j \in [\nf_i]$.
	By the union bound, we have
	\[
		\begin{aligned}
			\prob_{\xi}^{\nsamp} \big( \itrsect_{i \in [\np]} \itrsect_{j \in [\nf_i]} E_{ij} \big)
			 & = 1 - \prob_{\xi}^{\nsamp} ( \union_{i \in [\np]} \union_{j \in [\nf_i]} \compl{E_{ij}} ) \\
			 & \geq 1 - \tsum_{
       i \in [\np], j \in [\nf_i]
     } \prob_\xi^{\nsamp}(\compl{E_{ij}})
			\geq 1 - \gamma.
		\end{aligned}
	\]
	where the second inequality follows from \cref{lem:quantile-bound}.
\end{proof}

\fi
\end{document}